\def\@begintheorem#1#2{\par\bgroup{\sc #1 \ #2. }  \it \\\ignorespace }
\def\@opargbegintheorem#1#2#3{\par\bgroup{\sc #1\ #2 \ (#3).}  \it  \ignorespace}
\def\@endtheorem{\egroup}
\theoremstyle{plain}
\newtheorem{theorem}{Theorem}[section]
\theoremstyle{definition}
\newtheorem{definition}[theorem]{Definition}
\theoremstyle{remark}
\newtheorem{remark}[theorem]{Remark}
\theoremstyle{plain}
\theoremstyle{plain}
\newtheorem{lemma}[theorem]{Lemma}
\theoremstyle{plain}
\theoremstyle{plain}
\newtheorem{proposition}[theorem]{Proposition}
\numberwithin{equation}{section}
\newcommand{\SL}{\S\cdot L}
\newcommand{\C}{{\mathbb C}}
\newcommand{\D}{\mathcal{D}}
\newcommand{\Z}{{\mathbb Z}}
\newcommand{\R}{{\mathbb R}}
\newcommand{\I}{\mathbb{I}}
\newcommand{\V}{\mathbb{V}}
\renewcommand{\S}{{\mathbf{S}}}
\newcommand{\Rt}{{\R}^3}
\newcommand{\hx}{\hat{x}}
\renewcommand{\H}{\mathcal{H}}
\DeclareMathOperator{\Realpart}{Re}
\renewcommand{\Re}{\Realpart}
\lbrace\begin{array}{@{}l@{}}}%
\DeclarePairedDelimiter{\abs}{\lvert}{\rvert}
\DeclarePairedDelimiter{\norm}{\lVert}{\rVert}
\DeclarePairedDelimiter{\seq}{\lbrace}{\rbrace}
\newcommand{\ignora}[1]{}
\title[Boundary triples for the Dirac-Coulomb operator]%
{Boundary triples for the Dirac operator with
  Coulomb-type spherically symmetric perturbations}
\date{\today}
\subjclass[2010]{Primary 81Q10; Secondary 47N20,  47N50, 47B25.}
\keywords{Dirac operator, Coulomb potential, Hardy inequality, self-adjoint operator, boundary triples}
\author[B.~Cassano]{Biagio Cassano}
\address{B.~Cassano, Department of Theoretical Physics, NPI, Academy of Sciences, 25068 \v{R}e\v{z}, Czechia)}
\email{cassano@ujf.cas.cz}
\author[F.~Pizzichillo]{Fabio Pizzichillo}
\address{F.~Pizzichillo, CNRS \& CEREMADE, Universit\'e Paris-Dauphine, PSL Research University, F-75016 Paris, France}
\email{pizzichillo@ceremade.dauphine.fr}
\begin{document}
\begin{abstract}
  We determine explicitly a boundary triple for 
 the Dirac operator $H:=-i\alpha\cdot \nabla + m\beta + \mathbb V(x)$ in
 $\mathbb R^3$, for $m\in\mathbb R$ and $\mathbb V(x)= |x|^{-1} ( \nu
 \mathbb{I}_4 +\mu \beta -i  \lambda \alpha\cdot{x}/|x|\,\beta)$, with
 $\nu,\mu,\lambda \in \mathbb R$.
 Consequently we determine all the self-adjoint realizations of $H$
 in terms of the behaviour of the functions of their domain in the
 origin.
 When $\sup_{x} |x| |\mathbb V(x)| \leq 1$,
 we discuss the problem of selecting the \emph{distinguished}
 extension requiring
 that its domain is included in the domain of the appropriate quadratic form.
 
\end{abstract}
\maketitle

\section{Introduction and main results}
In this paper we determine a boundary triple and describe all the self-adjoint realizations of the 
differential operator 
\begin{equation}\label{eq:def.H}
H:=H_0+\V
\end{equation}
where $H_0$ is the free Dirac operator in $\R^3$ defined by 
\begin{equation}\label{eq:defn.H0}
H_0:=-i\alpha\cdot\nabla+m\beta, 
\end{equation}
with $m\in\R$,
\begin{equation*}
  \beta:=
  \begin{pmatrix}
\mathbb{I}_2&0\\
0&-\mathbb{I}_2
\end{pmatrix},
\quad
\alpha:=(\alpha_1,\alpha_2,\alpha_3),
\quad
\alpha_j:=\begin{pmatrix}
0& {\sigma}_j\\
{\sigma}_j&0
\end{pmatrix}\quad \text{for}\ j=1,2,3,
\end{equation*}
and 
$\sigma_j$ are the \emph{Pauli matrices}
\[
\quad{\sigma}_1 =
\begin{pmatrix}
0 & 1\\
1 & 0
\end{pmatrix},\quad {\sigma}_2=
\begin{pmatrix}
0 & -i\\
i & 0
\end{pmatrix},
\quad{\sigma}_3=
\begin{pmatrix}
1 & 0\\
0 & -1
\end{pmatrix},
\]
and finally
\begin{equation}\label{eq:def.V}
\V(x):= \frac{1}{|x|}\left( \nu \mathbb{I}_4 +\mu \beta +
  \lambda \left(-i\alpha\cdot\frac{x}{\abs{x}}\,\beta \right)\right), \quad  \text{for } x\neq 0,  
\end{equation}
where $\nu$, $\lambda$ and $\mu$ are real numbers, 
and $\mathbb{I}_4$ is the 
$4 \times 4$ identity matrix. 

The operator $H_0+\V$ describes the motion of
relativistic $\frac12$--spin
particles in the external potential $\V$. In detail, setting
\begin{equation*}
  \mathbb{V} = 
  \mathbb{V}_{el} + 
  \mathbb{V}_{sc} + 
  \mathbb{V}_{am} :=
  v_{el}(x) \mathbb{I}_4 +
  v_{sc}(x) \beta +
  v_{am}(x) \left(-i \alpha\cdot\frac{x}{\abs{x}}\,\beta \right),
\end{equation*}
for real valued $v_{el},v_{sc}, v_{am}$, the potentials $\mathbb{V}_{el}, \mathbb{V}_{sc},  \mathbb{V}_{am}$ are 
called respectively
 \emph{electric}, \emph{scalar}, and \emph{anomalous magnetic} potential. 
This particular class of potentials has the property that, in the
case that
$v_{el},v_{sc}, v_{am}$ only depend on the radial variable, 
the action of $H_0+\V$ leaves invariant the \emph{partial
  wave subspaces}
 (see below). Moreover, in the case that they have a
singularity $\sim \abs{x}^{-1}$ in the origin the potential has the same scaling as the Dirac Operator.

The dynamics of quantum systems is described in terms of
self-adjoint operators, as shown by the Stone's theorem, see e.g.~\cite{reedsimon1}. 
For this reason, it is a primary task to
describe all the self-adjoint extensions (if any exists) of a given symmetric
operator associated with a physical system. 
Von Neumann gave the first complete solution to this problem:
his theory is fully general and completely describes all the
self-adjoint extensions of every densely defined and symmetric 
operator in an abstract Hilbert space in terms of unitary operators
between its deficiency spaces, see e.g.~\cite{reedsimon2}.
Von Neumann's theory works at an abstract level: for specific classes of operators, it is desirable to have a more concrete
characterization of the self-adjoint extensions. 
In many cases, self-adjoint operators arise
when one introduces some boundary conditions for a differential
expression: perturbing operators with potentials with a singularity in one point, one would like to establish a direct link between 
self-adjoint extensions and behaviour in the point of the functions in
their domain.
Referring to \cite{bruning2008spectra, facchi2018self} for a general overview
on the theories of self-adjoint extensions,  we cite here the  
theory of \emph{boundary triples}, see \cite{schmudgen2012unbounded,
  bruning2008spectra, de2008intermediate, pavlov1987theory} and references therein, that
gives this desired description. The main result of this paper (\Cref{thm:boundary})
is the explicit determination of a boundary triple for the operator
$H$: thanks to this, we are then able to describe all the self-adjoint
realizations in terms of the behaviour in the origin of the functions in the
domain.

A vast literature has been dedicated to the problem of  the self-adjointness of
perturbed Dirac operators.
Remanding to the introduction of \cite{coulombbiagio}, to the 
survey \cite{gallone2017self}
and to the book \cite{thaller} for more details, we list here some relevant works. 
In \cite{kato1951fundamental} it was observed that
thanks to the Hardy inequality
\begin{equation}\label{eq:classicalhardy}
\frac{1}{4}\int_{\Rt}\frac{|f|^2}{|x|^2}\, dx\leq \int_{\Rt} |\nabla f|^2\,dx, \quad\text{for}
\
f\in C^\infty_c(\Rt),
\end{equation}
and the Kato-Rellich Theorem it is possible to prove that,
for $|\nu|\in\left[0,\frac12\right)$, the operator $H_0+\nu/\abs{x}$
is essentially self-adjoint on $C^\infty_c(\Rt)^4$ and self-adjoint on
$\D(H_0)=H^1(\Rt)^4$.
In fact the optimal range for the self-adjointness is  $|\nu|\in
\left[0,\frac{\sqrt{3}}{2}\right)$, as shown in
\cite{  gustafson1973some, rellich1953eigenwerttheorie, schmincke1972essential, weidmann1971oszillationsmethoden}.
For $|\nu|>\sqrt{3}/2$,  $H_0+\nu/\abs{x}$ is not essentially self-adjoint and infinite self-adjoint extensions can be constructed. 
Among these, for $|\nu|\in\left(\frac{\sqrt{3}}{2},1\right)$ there
exists one \emph{distinguished} extension $H_S$ such that
\begin{equation}\label{eq:schminke}
\D(H_D) \subset \D(r^{-1/2})^4=\seq{\psi\in L^2(\Rt)^4:|x|^{-1/2}\psi\in
  L^2(\Rt)^4}
\end{equation}
or equivalently $\D(H_D) \subset H^{1/2}(\Rt)^4 $: in other words, 
one requires that all the functions in the domain of the extension are in the form domain of the potential and the momentum. For details see 
\cite{burnap1981dirac, gallonemichelangeli2017self, klaus1979characterization, nenciu1976self, schmincke1972distinguished, wust1975distinguished}.
For  $|\nu|\geq 1$ many self-adjoint extensions can be built, and for  $|\nu|> 1$ none
appears to be \emph{distinguished} in some suitable sense, see
\cite{hogreve2012overcritical, voronov2007dirac, xia1999contribution}.
The definition of a distinguished extension for the case $\abs{\nu}=1$ 
has been settled in \cite{estebanloss}, where it is considered a pontential $V:\Rt\to \R$ such that that for some constant 
$c(V)\in (-1,1)$, $\Gamma:=\sup(V)<1+c(V)$ and for every $\varphi\in C^\infty_c(\Rt)^2$,
\begin{equation}\label{eq:diseq:EL}
\int_{\Rt}\left( \frac{|\sigma\cdot\nabla \varphi|^2}{1+c(V)-V}+\left(1+c(V)+V\right)|\varphi|^2\right) dx\geq 0.
\end{equation}   
In particular, for an electrostatic potential
$\V(x):= V(x) \I_4$,
$-\nu \abs{x}^{-1} \leq V(x) < 1 + \sqrt{1-\nu^2}$, $0<\nu\leq1$, 
the operator $H_0+\V$ is self-adjoint on a suitable domain.
If $0<\nu<1$, the self-adjoint extension described is the
distinguished one, as also shown in \cite{muller2016minimax};  for $\nu=1$, the self-adjoint extension described is the distinguished one, since continuous prolongation of the sub-critical case can cover it. 
Recently, in \cite{els2017domains}, it is shown that this extension can be obtained  as the limit in the norm resolvent sense of
potentials where the singularity has been removed with a cut-off
around the singularity.

The approach of \cite{kato1951fundamental} could be used independently on the
spherical symmetry of the potential: $H_0 + \V$ is self-adjoint when
$\V$ is  a $4\times 4$ Hermitian real-valued matrix potential $\V$ such that
\begin{equation*}
|\V(x)|\leq a\frac{1}{|x|}+b, \quad
x \in \R^{3}\setminus\{0\},
\end{equation*}
with $b\in\R$ and $a<1/2$, see \cite[Theorem V 5.10]{kato2013perturbation}.
In \cite{arrizabalaga2011distinguished, adv2013self,kato1983holomorphic} 
more general $4\times 4$ matrix-valued measured functions 
$\V$ are considered, in the assumption that $\abs{x}|\V(x)|\leq
\nu < 1$, and a distinguished self-adjoint extension (in the sense of
\eqref{eq:schminke}) is constructed, exploiting the 
\emph{Kato-Nenciu} inequality
\begin{equation}\label{eq:kato.nenciu}
\int_{\Rt}\frac{|\psi|^2}{|x|}\,dx \leq \int_{\Rt}\abs*{(-i\alpha\cdot\nabla+m\beta+ i\epsilon)\psi}^2|x|\,dx, \quad\text{for}
\ \psi\in C^\infty_c(\Rt)^4,  m, \epsilon \in \R.
\end{equation}

In our previous work \cite{coulombbiagio}, we considered 
matrix-valued potentials as in \eqref{eq:def.V} and 
we investigated the
existence of self-adjoint extensions $T$ such that
\begin{equation}\label{eq:T}
 \mathring H_{min} \subseteq T = T^* \subseteq H_{max},
\end{equation}
where the \emph{minimal operator} $\mathring H_{min}$
and the \emph{maximal operator} $H_{max}$ are defined
 as follows:
 \begin{align}
   \label{eq:defn.minimal.operator} 
    &\mathcal{D}(\mathring{H}_{min}):=C^\infty_c(\Rt\setminus \{0\})^4, 
    & & 
    \mathring H_{min}\psi :=H \psi \quad \text{for }\psi \in \mathcal{D}(\mathring H_{min}),
   \\
   \label{eq:defn.maximal.operator} 
   &\mathcal{D}(H_{max}):=\seq{\psi\in L^2(\Rt)^4: H\psi\in L^2(\Rt)^4}, 
   & & 
       H_{max} \psi :=H \psi \quad \text{for }\psi \in \mathcal{D}(H_{max}),
\end{align}
where $H \psi$ in \eqref{eq:defn.minimal.operator} is computed 
in the classical sense and in \eqref{eq:defn.maximal.operator} $H \psi\in L^2(\Rt)^4$ has to be read in the
distributional sense.
It is easy to see that $\mathring H_{min}$ is symmetric and $(\mathring H_{min})^*=H_{max}$.
The strategy of \cite{coulombbiagio} consists in considering the
self-adjointness of $H_0 + \V$ on the \emph{partial
  wave subspaces}:
such spaces are left invariant by $H_0$ and potentials $\mathbb V$ as in \eqref{eq:def.V}.
We sketch here this topic, referring to \cite{coulombbiagio} and \cite[Section 4.6]{thaller} for further details.

Let $Y^l_n$ be the spherical harmonics. They are defined for $n = 0, 1, 2, \dots$, and $l =-n,-n + 1,\dots , n,$ and they satisfy $\Delta_{\mathbb{S}^2} Y^l_n= n(n + 1)Y^l_n$, where $\Delta_{\mathbb{S}^2}$ denotes the usual spherical Laplacian. Moreover, $Y^l_n$ form a complete orthonormal set in $L^2(\mathbb{S}^2)$.
For $j = 1/2, 3/2, 5/2, \dots , $ and $m_j = -j,-j + 1, \dots , j$, set
\begin{align*}
\psi^{m_j}_{j-1/2}&:=
\frac{1}{\sqrt{2j}}
\left(\begin{array}{c}
\sqrt{j+m_j}\,Y^{m_j-1/2}_{j-1/2}\\
\sqrt{j-m_j}\,Y^{m_j+1/2}_{j-1/2}\\
\end{array}\right),
\\
\psi^{m_j}_{j+1/2}&:=\frac{1}{\sqrt{2j+2}}
\left(\begin{array}{c}
\sqrt{j+1-m_j}\,Y^{m_j-1/2}_{j+1/2}\\
-\sqrt{j+1+m_j}\,Y^{m_j+1/2}_{j+1/2}\\
\end{array}\right);
\end{align*}
then  $\psi^{m_j}_{j\pm1/2}$ form a complete orthonormal set in $L^2(\mathbb{S}^2)^2$.
Moreover, we set 
\begin{equation*}
r=|x|,\quad\hat x = x / |x|\quad\text{and}\quad
L=-ix\times\nabla\quad\text{for }x \in \Rt\setminus\{0\}.
\end{equation*}
Then
\begin{equation*}
(\sigma\cdot\hx)\psi^{m_j}_{j\pm 1/2}=\psi^{m_j}_{j\mp1/2},
\quad\text{and}\quad
(1+\sigma\cdot L)\psi^{m_j}_{j\pm 1/2}=\pm(j+1/2)\psi^{m_j}_{j\pm 1/2},
\end{equation*}
where $\sigma=(\sigma_1,\sigma_2,\sigma_3)$ is the vector of \textit{Pauli's matrices}.
For $k_j:=\pm(j+1/2)$ we set
\begin{equation*}
\Phi^+_{m_j,\pm(j+1/2)}:=
\left(\begin{array}{c}
i\,\psi^{m_j}_{j\pm1/2}\\
0
\end{array}\right),
\quad
\Phi^-_{m_j,\pm(j+1/2)}:=
\left(\begin{array}{c}
0\\
\psi^{m_j}_{j\mp1/2}
\end{array}\right).
\end{equation*}
Then, the set $\{\Phi^+_{m_j,k_j},\Phi^-_{m_j,k_j}\}_{j,k_j,m_j}$ is a 
complete orthonormal basis of $L^2(\mathbb{S}^2)^4$.
We prescribe the following ordering for the triples
$(j,m_j,k_j)$, for $j=\frac12,\frac32,\dots ; \, m_j = -j,\dots,j; \,
k_j=j+1/2,-j-1/2$:
\begin{equation}\label{eq:ordering}
    \begin{split}
   & \left(\frac12,-\frac12,1\right), \left(\frac12,\frac12,1\right),
       \left(\frac12,-\frac12,-1\right),
       \left(\frac12,\frac12,-1\right),
       \\
     & \left(\frac32,-\frac32,2\right), \left(\frac32,-\frac12,2\right),
     \left(\frac32,\frac12,2\right),\left(\frac32,\frac32,2\right),
     \\
     &\left(\frac32,-\frac32,-2\right), \left(\frac32,-\frac12,-2\right),
     \left(\frac32,\frac12,-2\right),\left(\frac32,\frac32,-2\right),
     \dots, \\
     & \left(j, -j, j + \frac12\right),\dots, \left(j,j, j+\frac12\right),
     \left(j, -j, -j - \frac12\right),\dots, \left(j,j, -j-\frac12\right), \dots
  \end{split} 
\end{equation}

We define the following space:
\begin{equation*}
\mathcal{H}_{m_j,k_j}:=
\seq*{ 
\frac{1}{r}\left(
f^+_{m_j,k_j}(r)\Phi^+_{m_j,k_j}(\hx)+
f^-_{m_j,k_j}(r)\Phi^-_{m_j,k_j}(\hx)\right)
\in L^2(\Rt)
\mid
f^\pm_{m_j,k_j}\in L^2(0,+\infty)}.
\end{equation*}
From \cite[Theorem 4.14]{thaller} we know that the operators $\mathring H_{min}$ and $H_{max}$ leave the partial wave subspace $\H_{m_j,k_j}$ invariant and their action can be decomposed in terms of the basis $\seq{\Phi^+_{m_j,k_j},\Phi^-_{m_j,k_j}}$ as follows:
\begin{align}
\label{eq:H.min}
\mathring H_{min}&\cong
\bigoplus_{j=\frac{1}{2},\frac{3}{2},\dots}^\infty\,\bigoplus_{m_j=-j}^j\,
\bigoplus_{k_j=\pm(j+1/2)}\,
h_{m_j,k_j},
\\
\notag
 H_{max}&\cong
\bigoplus_{j=\frac{1}{2},\frac{3}{2},\dots}^\infty\,\bigoplus_{m_j=-j}^j\,
\bigoplus_{k_j=\pm(j+1/2)}\,
{h}^*_{m_j,k_j},
\end{align}
where ``$\cong$'' means that the operators are unitarily equivalent, with 
\begin{equation}\label{eq:dirac.spherical}
\begin{split}
& {D}(  h_{m_j,k_j})= C^\infty_c(0,+\infty)^2,
\\
& h_{m_j,k_j}
( f^+,  f^-):
=\left(
\begin{array}{cc}
m+\frac{\nu+\mu}{r} & -\partial_r+\frac{k_j+\lambda}{r}\\
\partial_r+\frac{k_j+\lambda}{r} & -m+\frac{\nu-\mu}{r}
\end{array}\right)
\begin{pmatrix}
  f^+ \\
 f^-
\end{pmatrix};
\end{split}
\end{equation}
and 
\begin{equation}\label{eq:dirac.spherical*}
\begin{split}
& {D}( h^*_{m_j,k_j})= \seq{(f^+,f^-)\in L^2(0,+\infty): h^*_{m_j,k_j}(f^+,f^-) \in L^2(0,+\infty)^2},\\
& h^*_{m_j,k_j}
(
  f^+, f^-):
=\left(
\begin{array}{cc}
m+\frac{\nu+\mu}{r} & -\partial_r+\frac{k_j+\lambda}{r}\\
\partial_r+\frac{k_j+\lambda}{r} & -m+\frac{\nu-\mu}{r}
\end{array}\right)
\begin{pmatrix}
  f^+ \\
 f^-
\end{pmatrix};
\end{split}
\end{equation}
where $h^*_{m_j,k_j}(f^+,f^-)$ has to be read in the distributional sense as done in \eqref{eq:defn.maximal.operator}.
It is easy to see that $h^*_{m_j,k_j}$ is the adjoint of $h_{m_j,k_j}$.

The main result of \cite{coulombbiagio} is the classification of all
the self-adjoint extensions $t_{m_j,k_j}$ such that
$h_{m_j,k_j}\subseteq  t_{m_j,k_j}=t_{m_j,k_j}^* \subseteq h^*_{m_j,k_j}$:
as an immediate consequence, we can build up self-adjoint operators
$T$ as in \eqref{eq:T} setting 
\begin{equation*}
    T \cong \bigoplus_{j=\frac{1}{2},\frac{3}{2},\dots}^\infty\,
    \bigoplus_{m_j=-j}^j\, \bigoplus_{k_j=\pm(j+1/2)}\,
    t_{m_j,k_j}.
\end{equation*}

The self-adjointness of $t_{m_j,k_j}$ is related to the quantity
\begin{equation}
  \label{eq:defn.delta}
  \delta_{k_j}=\delta_{k_j}(\lambda, \mu, \nu):=(k_j+\lambda)^2+\mu^2-\nu^2.
\end{equation}
In \cite[Theorems 1.1, 1.2, 1.3]{coulombbiagio} we show that
if $\delta_{k_j} \geq 1/4$ then
$t_{m_j,k_j}$ is essentially self-adjoint 
and if $\delta_{k_j} < 1/4$
that there exists a one (real) parameter family 
$\left(t(\theta)_{m_j,k_j}\right)_{\theta \in [0,\pi)}$
    of self-adjoint extensions such that $h_{m_j,k_j}\subset
    t(\theta)_{m_j,k_j}=t(\theta)_{m_j,k_j}^* \subset h^*_{m_j,k_j}$.
In conclusion, we can define a family of
self-adjoint extensions parametrised by $d$ real parameters, with
\begin{equation}
  \label{eq:defn.d}
  d:= 
\sum_{\substack{j,m_j,k_j \\ (k_j+\lambda)^2+\mu^2-\nu^2<1/4}} 
1
\,
=
\sum_{\substack{k \in \Z\setminus \{0\} \\ (k+\lambda)^2 + \mu^2
      -\nu^2 < 1/4}}
  2 \abs{k}.
\end{equation}
In this paper we show that the totality of the self-adjoint extensions is a much richer
set. Indeed, they are in one-to-one correspondence with the unitary
matrices 
 \begin{equation*}
    \mathcal U (d) := \{U \in \C^{d \times d} : U^* U = U U^* = \I_{d} \},  
  \end{equation*}
 that is they are a family of $d^2$ real parameters. This correspondence relates the self-adjoint extensions with the
 behaviour in the origin of the functions in their domain.
 In order to do so, we
exploit the theory of the boundary triples: we remind here its
definition, following the notations from
\cite[Definition 1.7]{bruning2008spectra}.

\begin{definition}\label{defn:boundary}
  Let $E: \mathcal{D}(E) \subseteq
\mathcal{H} \to \mathcal{H}$ be a closed linear operator in a
Hilbert space $\mathcal{H}$, and let $\mathcal G$ be an other Hilbert space.
Let $\Gamma_1, \Gamma_2 : \mathcal{D}(E) \to \mathcal{G}$ be linear
maps, and finally define  $(\Gamma_1, \Gamma_2) : \mathcal{D}(E) \to \mathcal{G}
\oplus \mathcal{G} $ as $(\Gamma_1,\Gamma_2)\psi:=(\Gamma_1\psi,\Gamma_2\psi)$, for any $\psi \in \mathcal{D}(E)$.
We say that the triple $(\mathcal{G},\Gamma_1,\Gamma_2)$ is a
\emph{boundary triple} for $E$ if and only if:
\begin{gather}
\label{eq:boundarytriple.cond1}
  \langle \psi, E \widetilde{\psi} \rangle_{\mathcal{H}}
-
\langle E \psi, \widetilde{\psi} \rangle_{\mathcal{H}}
=
\langle \Gamma_1 \psi ,\Gamma_2 \widetilde{\psi} \rangle_{\mathcal{G}}
-
\langle \Gamma_1 \psi, \Gamma_2 \widetilde{\psi}
\rangle_{\mathcal{G}}
\quad \text{ for all } \psi, \widetilde{\psi} \in \mathcal{D}(E);
 \\
\label{eq:boundarytriple.cond2}  
\text{ the map }(\Gamma_1, \Gamma_2) : \mathcal{D}(E) \to \mathcal{G}
\oplus \mathcal{G} \text{ is surjective};  
 \\
\label{eq:boundarytriple.cond3}
\text{ the set } \ker(\Gamma_1, \Gamma_2) \text{ is dense in $\mathcal{H}$}.
\end{gather}
\end{definition}
The theory of the boundary triples is well developed and powerful: the
explicit knowledge of a boundary triple for a symmetric and closed operator
can be used to obtain many important results.
In this paper we exploit it to describe all the self-adjoint
extensions: the following proposition is
consequence of Theorem 1.2, Proposition 1.5 and Theorem 1.12
in \cite{bruning2008spectra}, or equivalently of
Proposition 14.4 and Theorem 14.10 in \cite{schmudgen2012unbounded},
hence the proof is omitted.
\begin{proposition}\label{thm:main-abs}
Let $E_0$ be a symmetric operator on a Hilbert space $\H$ and let $(\mathcal{G}, \Gamma_1,\Gamma_2)$ be a boundary triple for $E^*:=(E_0)^*$. Then
the following hold:
\begin{enumerate}[label=({\roman*})]
\item if $\mathcal{G}=\seq{0}$, $E_0$ is essentially self-adjoint;
\item if $\mathcal{G}\neq \seq{0}$,  $E_0$ has many self-adjoint extensions. They can be classified in the following equivalent ways:
  \begin{itemize}
    \item For any $A, B$ bounded linear operators on $\mathcal{G}$, the extension $E_{A,B}$
with domain
\begin{equation}\label{eq:defn.TAB-abs}
  \D(E_{A,B})=
 \seq{
 \psi\in \D(E^*):\ A \Gamma_1(\psi) =
 B \Gamma_2(\psi)}
\end{equation}
is self-adjoint if and only if
\begin{gather}
  \label{eq:conditions.A,B.1-abs}
  AB^* = B A^*,
  \\
  \label{eq:conditions.A,B.2-abs}
  \text{ker}
  \begin{pmatrix}
    A & -B \\
    B & A
  \end{pmatrix}
  =0.
\end{gather}
  \item There exists a one-to-one correspondence 
between the self-adoint extensions of $E_0$
and the unitary operators $\mathcal{U}(\mathcal{G})$.
For $U\in \mathcal{U}(\mathcal{G})$, the corresponding self-adjoint extension $E_U$ has domain
 \begin{equation}\label{eq:descrizione.domini-abs}
 \D(E_U)=
 \seq{
 \psi\in \D(E^*):\ i (\mathbb{I}_{\mathcal{G}} + U  )\Gamma_1(\psi) =
 (\mathbb{I}_{\mathcal{G}} - U) \Gamma_2(\psi)}.
\end{equation}
  \end{itemize}
  \end{enumerate}
\end{proposition}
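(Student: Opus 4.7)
The plan is to deduce this proposition from the abstract machinery of boundary triples as developed in \cite{bruning2008spectra, schmudgen2012unbounded}; its role here is to collect two equivalent presentations of the parametrisation of self-adjoint extensions, both of which will be used later to describe $\D(T)$ in terms of asymptotic behaviour at the origin.

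For part (i), if $\mathcal{G}=\seq{0}$ then both $\Gamma_1$ and $\Gamma_2$ are identically zero, so the Green-type identity \eqref{eq:boundarytriple.cond1} reduces to $\langle \psi, E^*\widetilde\psi\rangle_\H = \langle E^*\psi,\widetilde\psi\rangle_\H$ for every $\psi,\widetilde\psi\in\D(E^*)$, which says that $E^*$ is symmetric, i.e.\ $E^*\subseteq (E^*)^*=\overline{E_0}$. Combining with the standard inclusion $\overline{E_0}\subseteq E^*$ forces $E^*=\overline{E_0}$, so $E_0$ is essentially self-adjoint.

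For part (ii), I would first record the fact that every self-adjoint extension $T$ of $E_0$ satisfies $\overline{E_0}\subseteq T\subseteq E^*$, so it is determined by a subspace $\M\subseteq \D(E^*)$ containing $\D(\overline{E_0})$. Property \eqref{eq:boundarytriple.cond3} forces $\ker(\Gamma_1,\Gamma_2)=\D(\overline{E_0})$, and together with \eqref{eq:boundarytriple.cond1} the self-adjointness of $E^*|_\M$ becomes equivalent to the image $(\Gamma_1,\Gamma_2)(\M)\subseteq \mathcal{G}\oplus\mathcal{G}$ being a maximal isotropic (Lagrangian) subspace for the skew-Hermitian form
\begin{equation*}
\omega\bigl((g_1,g_2),(h_1,h_2)\bigr):=\langle g_1,h_2\rangle_\mathcal{G}-\langle g_2,h_1\rangle_\mathcal{G}.
\end{equation*}
The surjectivity condition \eqref{eq:boundarytriple.cond2} then promotes this to an actual bijection between self-adjoint extensions of $E_0$ and Lagrangian subspaces of $(\mathcal{G}\oplus\mathcal{G},\omega)$.

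The two classifications in the statement are now just two parametrisations of such Lagrangians. For the $(A,B)$ description, a subspace of the form $\seq{(g_1,g_2):Ag_1=Bg_2}$ is isotropic exactly when $AB^*=BA^*$ (a direct computation from $\omega$) and is maximal exactly when \eqref{eq:conditions.A,B.2-abs} holds, the latter being equivalent to the injectivity of the natural map into the quotient by the graph condition. For the unitary description, I would apply a Cayley-type change of variables $g_\pm:=\Gamma_2\psi\pm i\Gamma_1\psi$, which diagonalises $\omega$ and realises Lagrangians as graphs $g_+=Ug_-$ of unitaries $U\in\mathcal U(\mathcal{G})$; inverting the substitution reproduces exactly $i(\I_\mathcal{G}+U)\Gamma_1\psi=(\I_\mathcal{G}-U)\Gamma_2\psi$. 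The main obstacle is not any single computation but the verification that both parametrisations are genuinely bijective onto the set of self-adjoint extensions, which rests on the interplay between \eqref{eq:boundarytriple.cond2} and the identification $\ker(\Gamma_1,\Gamma_2)=\D(\overline{E_0})$; since this is carried out in full detail in \cite{bruning2008spectra,schmudgen2012unbounded}, omitting the proof in the paper is justified.
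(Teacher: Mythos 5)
Your proposal is correct and follows essentially the same route as the paper, which omits the proof entirely and defers to the standard boundary-triple results in the cited references (Br\"uning--Geyler--Pankrashkin; Schm\"udgen); your sketch of the Green-identity/Lagrangian-subspace/Cayley-transform argument is a faithful outline of what those references establish. The only imprecision is the claim that $AB^*=BA^*$ alone characterises isotropy of $\{(g_1,g_2):\,A g_1=Bg_2\}$ (it fails for $A=B=0$, where the subspace is all of $\mathcal{G}\oplus\mathcal{G}$); the correct statement is that \eqref{eq:conditions.A,B.1-abs} and \eqref{eq:conditions.A,B.2-abs} are \emph{jointly} equivalent to that subspace being maximal isotropic, as in the references you and the paper both invoke.
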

\begin{remark}
  The descriptions of the self-adjoint extensions in
  \eqref{eq:defn.TAB-abs} and \eqref{eq:descrizione.domini-abs}
  are equivalent and both useful and interesting.
  Indeed,
  \eqref{eq:defn.TAB-abs} is useful for the applications:
  for example we will exploit it in \Cref{rmk:distinguished}
  to determine the distinguished extension for the Dirac-Coulomb
  operator. The description in \eqref{eq:descrizione.domini-abs}
  is interesting from a more theoretical point of view, since
  it gives a one-to-one correspondence between the self-adjoint
  extensions and the elements of the unitary operators on
  $\mathcal{G}$, allowing to label this extensions with
  a unique choice of parameters.
\end{remark}

We introduce some notations.

\begin{definition}\label{def:boun-trip-sferical}
Let 
\begin{equation*}
\psi(x)= \sum_{j,m_j,k_j} \frac{1}{r}\left(
f^+_{m_j,k_j}(r)\Phi^+_{m_j,k_j}(\hx)+
f^-_{m_j,k_j}(r)\Phi^-_{m_j,k_j}(\hx)
\right)
\in \mathcal{D}(H_{max})
\end{equation*}
and set $f_{m_j,k_j}:=\left(f^+_{m_j,k_j},f^-_{m_j,k_j}\right)\in \D(h^*_{m_j,k_j})$.
We select in the order \eqref{eq:ordering} the triples $(j,m_j,k_j)$
 such that 
 $\delta_{k_j}:=(k_j+\lambda)^2 + \mu^2 - \nu^2 < 1/4$
 and we denote this ordered set $I$: we have that $I$ has
 exactly $d$ elements.
 Moreover  we set
 \begin{equation}
   \label{eq:defn.gammakj}
   \gamma_{k_j}:=\sqrt{\abs{\delta_{k_j}}},
   \quad \text{ for all } j=\frac12,\frac32,\dots. 
 \end{equation}
Then, for any $(j,m_j,k_j)\in I$:
\begin{enumerate}[label=\emph{({\roman*})}]
\item \label{item:boundary.sottocritico}
  if $0 < \delta_{k_j} < 1/4$ from \cite[Proposition 3.1, \emph{(iii)}]{coulombbiagio} we know that
\begin{equation}\label{eq:limite.delta.12}
\lim_{r\to 0}
\abs*{
    \begin{pmatrix}
    f_{m_j,k_j}^+(r) \\ f_{m_j,k_j}^-(r)
  \end{pmatrix}
  - D_{k_j} 
  \begin{pmatrix}
    A^+ r^{\gamma_{k_j}} \\ A^- r^{-\gamma_{k_j}}
  \end{pmatrix}
  }
r^{-1/2}
=0,
\end{equation}
being $D_{k_j} \in \R^{2\times2}$ the invertible matrix
\begin{equation}\label{eq:defn.D} 
  D_{k_j}:= 
  \begin{cases}	
 \frac{1}{2\gamma(\lambda + k_j - \gamma_{k_j})}
  \begin{pmatrix}
    \lambda + k_j - \gamma_{k_j} & \nu-\mu \\
     -(\nu + \mu)        & -(\lambda + k_j - \gamma_{k_j})
  \end{pmatrix}
  \quad &\text{ if }\lambda + k_j - \gamma_{k_j} \neq 0,\\
      \frac{1}{-4\gamma_{k_j}^2}
    \begin{pmatrix}
      \mu - \nu            & 2\gamma_{k_j} \\
     2 \gamma_{k_j} & -(\nu + \mu) 
    \end{pmatrix}
    \quad &\text{ if }\lambda + k_j - \gamma_{k_j} = 0;
\end{cases}
\end{equation}
we set
\begin{equation}\label{eq:def.gamma.spherical.sotto}
\begin{pmatrix}
\Gamma_{m_j,k_j}^+(f_{m_j,k_j})\\
\Gamma_{m_j,k_j}^-(f_{m_j,k_j})
\end{pmatrix}:=
D_{k_j}\begin{pmatrix}
A^+\\
 A^-
 \end{pmatrix};
\end{equation}
\item \label{item:boundary.critico}
  if $ \delta_{k_j} = 0$, from \cite[Proposition 3.1, \emph{(iv)}]{coulombbiagio} we know that
  \begin{equation}
\label{eq:limit.f.gamma=0}
\begin{split}
&\lim_{r\to 0}
\abs*{
    \begin{pmatrix}
    f_{m_j,k_j}^+(r) \\ f_{m_j,k_j}^-(r)
  \end{pmatrix}
  - (M_{k_j} \log r+\mathbb{I}_2)
\begin{pmatrix}
A^+\\
A^-
\end{pmatrix}
 } 
 r^{-1/2}
 = 0, 
\end{split}
\end{equation}
being $M_{k_j}\in \R^{2\times2}$, $M_{k_j}^2=0$  defined as follows
\begin{equation}\label{eq:defn.M}
   M_{k_j}:= 
\begin{pmatrix}
  -(k_j+\lambda) & - \nu + \mu \\
  \nu + \mu           & k_j+\lambda
\end{pmatrix};
 \end{equation}
 we set
\begin{equation}\label{eq:def.gamma.spherical.critico}
\begin{pmatrix}
\Gamma_{m_j,k_j}^+(f_{m_j,k_j})\\
\Gamma_{m_j,k_j}^-(f_{m_j,k_j})
\end{pmatrix}:=
\begin{pmatrix}
A^+\\
A^-
\end{pmatrix};
\end{equation}
\item \label{item:boundary.sopracritico}
  if $ \delta_{k_j} < 0$, from \cite[Proposition 3.1, \emph{(v)}]{coulombbiagio}, we know that
\begin{equation}\label{eq:f0.gamma<0}
 \lim_{r\to 0}
\abs*{
    \begin{pmatrix}
    f_{m_j,k_j}^+(r) \\ f_{m_j,k_j}^-(r)
  \end{pmatrix}
  - E_{k_j} 
  \begin{pmatrix}
    A^+ r^{i\gamma_{k_j}} \\ A^- r^{-i\gamma_{k_j}}
  \end{pmatrix}
  }
r^{-1/2}
=0,
\end{equation}
being $E_{k_j} \in \C^{2\times2}$ the invertible matrix
\begin{equation}\label{eq:defn.E}
  E_{k_j}:= 
   \frac{1}{2i \gamma_{k_j}(\lambda + k - i\gamma_{k_j})}
  \begin{pmatrix}
    \lambda + k - i\gamma_{k_j} & \nu-\mu \\
     -(\nu + \mu)        & -(\lambda + k - i\gamma_{k_j})
  \end{pmatrix};
\end{equation}
we set
\begin{equation}\label{eq:def.gamma.spherical.sopra}
\begin{pmatrix}
\Gamma_{m_j,k_j}^+(f_{m_j,k_j})\\
\Gamma_{m_j,k_j}^-(f_{m_j,k_j})
\end{pmatrix}:=
E_{k_j}
\begin{pmatrix}
A^+\\
 A^-
 \end{pmatrix}.
\end{equation}
\end{enumerate}
Finally, set $\Gamma^+,\Gamma^-:\D(H_{max})\to \C^d$ as follows:
\begin{equation}\label{eq:def-Gamma}
\Gamma^{\pm}(\psi)= \left(\Gamma_{m_j,k_j}^{\pm}(f_{m_j,k_j})\right)_{(j,m_j,k_j)\in I}
 \in \C^d.
\end{equation}
Then, by definition, for any $(j,m_j,k_j)\in I$
\begin{equation}\label{eq:def-Gamma-abuse}
\left(\Gamma^{\pm}(\psi)\right)_{m_j,k_j}= \Gamma_{m_j,k_j}^{\pm}(f_{m_j,k_j})
 \in \C.
\end{equation}
\end{definition}

 We are now in position to state the main result of this paper.
\begin{theorem}[Boundary triples for  $H_{max}$]\label{thm:boundary}
  Let $H_{max}$ be defined as in \eqref{eq:defn.maximal.operator},
  let $d\in \mathbb{N}$ be as in \eqref{eq:defn.d} and 
  assume that $d>0$. Let $\Gamma^+,\Gamma^-$ be defined as in \eqref{eq:def-Gamma}.
Then, $(\C^d, \Gamma^+, \Gamma^-)$ is a boundary triple for
$H_{max}$.
\end{theorem}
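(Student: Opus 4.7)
The approach is to exploit the direct-sum decomposition $H_{max}\cong\bigoplus h^*_{m_j,k_j}$ from \eqref{eq:H.min} and to verify the three conditions of \Cref{defn:boundary} one partial wave at a time. For $(j,m_j,k_j)\notin I$ the operator $h_{m_j,k_j}$ is essentially self-adjoint by \cite[Theorems 1.1-1.3]{coulombbiagio}, so $h^*_{m_j,k_j}$ is itself self-adjoint; these summands carry no boundary data, contribute trivially to \eqref{eq:boundarytriple.cond1}, are absent from $\C^d\oplus\C^d$, and automatically satisfy the density condition \eqref{eq:boundarytriple.cond3}. The proof therefore reduces to a finite direct sum over $(j,m_j,k_j)\in I$ and, by additivity, to one summand at a time.

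For Green's identity \eqref{eq:boundarytriple.cond1}, given $f,\tilde f\in\D(h^*_{m_j,k_j})$ and $0<\epsilon<R<\infty$, integration by parts on $(\epsilon,R)$ using the explicit form \eqref{eq:dirac.spherical*} yields the Wronskian-type identity
\begin{equation*}
\int_\epsilon^R \bigl(\overline{f}\cdot h^*\tilde f-\overline{h^*f}\cdot\tilde f\bigr)\,dr=\bigl[\overline{f^-}\tilde f^+-\overline{f^+}\tilde f^-\bigr]_{r=\epsilon}^{r=R},
\end{equation*}
since the matrices multiplying $1/r$ and $m\beta$ in \eqref{eq:dirac.spherical*} are Hermitian and only the anti-Hermitian off-diagonal $\partial_r$ terms survive. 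The limit as $R\to\infty$ vanishes by a standard argument for Dirac-type maximal operators based on $f,h^*f\in L^2(0,\infty)^2$. The limit as $\epsilon\to 0$ is evaluated by substituting the asymptotic expansions \eqref{eq:limite.delta.12}, \eqref{eq:limit.f.gamma=0} and \eqref{eq:f0.gamma<0}: the key algebraic fact is that for any $2\times 2$ matrix $X$ one has $X^T\bigl(\begin{smallmatrix}0&1\\-1&0\end{smallmatrix}\bigr)X=\det(X)\bigl(\begin{smallmatrix}0&1\\-1&0\end{smallmatrix}\bigr)$, and the matrices $D_{k_j}$, $M_{k_j}$, $E_{k_j}$ of \Cref{def:boun-trip-sferical} are tailored so that the limiting boundary form equals $\overline{\Gamma^+_{m_j,k_j}(f)}\,\Gamma^-_{m_j,k_j}(\tilde f)-\overline{\Gamma^-_{m_j,k_j}(f)}\,\Gamma^+_{m_j,k_j}(\tilde f)$. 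In the critical case $\delta_{k_j}=0$ one must additionally check that the potentially divergent $(\log r)^2$ contributions cancel, which they do thanks to $M_{k_j}^2=0$.

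For surjectivity \eqref{eq:boundarytriple.cond2}, fix $(j,m_j,k_j)\in I$ and a prescribed target $(c^+,c^-)\in\C^2$. Since $D_{k_j}$ and $E_{k_j}$ are invertible (and the critical-case map $(A^+,A^-)\mapsto(A^+,A^-)$ is trivially so), choose $(A^+,A^-)$ producing the required data and set $f:=\chi\cdot g_{A^+,A^-}$, where $g_{A^+,A^-}$ is the explicit singular profile appearing in \eqref{eq:limite.delta.12}, \eqref{eq:limit.f.gamma=0} or \eqref{eq:f0.gamma<0}, and $\chi$ is a smooth cutoff equal to $1$ near $0$ and vanishing for large $r$. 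Since $\gamma_{k_j}<1/2$ in every regime, $f\in L^2(0,\infty)^2$; since $g_{A^+,A^-}$ annihilates the leading singular part of $h^*$, the remainder $h^*f$ consists of bounded mass terms and smooth compactly supported terms from $\chi'$, hence also belongs to $L^2$. For density \eqref{eq:boundarytriple.cond3}, every element of $C^\infty_c(\R^3\setminus\{0\})^4$ vanishes near the origin, so the coefficients $(A^+,A^-)$ vanish on every partial wave and it lies in $\ker(\Gamma^+,\Gamma^-)$; density of $C^\infty_c(\R^3\setminus\{0\})^4$ in $L^2(\R^3)^4$ then gives \eqref{eq:boundarytriple.cond3}.

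The hardest part is expected to be the Green's identity in the critical regime $\delta_{k_j}=0$: the algebra needed to show that the $(\log r)^2$ terms cancel, the $\log r$ terms produce no limiting contribution, and the constant-in-$r$ terms reproduce exactly $\overline{\Gamma^+}\Gamma^-\!-\overline{\Gamma^-}\Gamma^+$ hinges delicately on the nilpotency $M_{k_j}^2=0$. The sub- and supercritical regimes follow the same pattern but are algebraically cleaner, and the only genuinely soft step is the justification of the $R\to\infty$ limit of the Wronskian.
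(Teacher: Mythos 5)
Your proposal is correct and follows essentially the same route as the paper: reduce to the partial-wave operators $h^*_{m_j,k_j}$ via \eqref{eq:H.min}, discard the summands with $\delta_{k_j}\geq 1/4$ (where $h^*_{m_j,k_j}$ is self-adjoint), and verify the three boundary-triple conditions on each $(j,m_j,k_j)\in I$ using the asymptotics \eqref{eq:limite.delta.12}, \eqref{eq:limit.f.gamma=0}, \eqref{eq:f0.gamma<0} for the Wronskian limit, cutoffs of the singular profiles for surjectivity, and $C^\infty_c(\R^3\setminus\{0\})^4\subseteq\ker(\Gamma^+,\Gamma^-)$ for density. This is exactly the content of \Cref{thm:byparts.spherical.adjoint} and the subsequent assembly in the paper's proof.
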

\begin{remark}
  In general, boundary triples are not unique (see \cite[Proposition 1.14,
  Proposition 1.15]{bruning2008spectra}).  
  For example, a different boundary triple is determined 
  already by choosing an ordering of the triples different from the
  one in \eqref{eq:ordering}. 
\end{remark}
Thanks to the theory of the boundary triples, we can now describe all the self-adjoint extension of $\mathring H_{min}$: the following theorem is
consequence of \Cref{thm:boundary} and \Cref{thm:main-abs}:
\begin{theorem}\label{thm:main}
Let $\mathring H_{min}$ be defined as in \eqref{eq:defn.minimal.operator} and $d\in \mathbb{N}$ as in \eqref{eq:defn.d}.
The following hold:
\begin{enumerate}[label=({\roman*})]
\item if $d=0$, $\mathring H_{min}$ is essentially self-adjoint;
\item if $d>0$,  $\mathring H_{min}$ has many self-adjoint extensions. They can be classified in the following equivalent ways:
  \begin{itemize}
    \item For any $A,B \in \C^{d\times d}$, the extension $T_{A,B}$
with domain
\begin{equation}\label{eq:defn.TAB}
  \D(T_{A,B})=
 \seq{
 \psi\in \D(H_{max}):\ A \Gamma^+(\psi) =
 B \Gamma^-(\psi)}
\end{equation}
is self-adjoint if and only if
\begin{gather*}
  \label{eq:conditions.A,B.1}
  AB^* = B A^*,
  \\
  \label{eq:conditions.A,B.2}
  \text{ker}
  \begin{pmatrix}
    A & -B \\
    B & A
  \end{pmatrix}
  =0.
\end{gather*}
  \item There exists a one-to-one correspondence 
between the self-adoint extensions of ${\mathring H_{min}}$
and the unitary matrices $\mathcal{U}(d)$.
For $U\in \mathcal{U}(d)$, the corresponding self-adjoint extension $T_U$ has domain
 \begin{equation}\label{eq:descrizione.domini}
 \D(T_U)=
 \seq{
 \psi\in \D(H_{max}):\ i (\mathbb{I}_d + U  )\Gamma^+(\psi) =
 (\mathbb{I}_d - U) \Gamma^-(\psi)}.
\end{equation}
  \end{itemize}
  \end{enumerate}
\end{theorem}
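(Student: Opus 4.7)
The plan is to obtain \Cref{thm:main} as an immediate corollary of \Cref{thm:boundary} together with the abstract \Cref{thm:main-abs}, identifying all the pieces correctly. The hypotheses to check reduce to: (a) that $\mathring H_{min}$ is a symmetric operator whose adjoint is $H_{max}$, and (b) that the triple $(\C^d,\Gamma^+,\Gamma^-)$ produced in \Cref{thm:boundary} really is a boundary triple for $(\mathring H_{min})^{*}$. Both are available: (a) is recorded just after \eqref{eq:defn.maximal.operator} (``$\mathring H_{min}$ is symmetric and $(\mathring H_{min})^{*}=H_{max}$''), and (b) is the content of \Cref{thm:boundary}. Setting $E_0:=\mathring H_{min}$, $E^{*}:=H_{max}$, $\mathcal H:=L^2(\R^3)^4$, and $\mathcal G:=\C^d$, the hypotheses of \Cref{thm:main-abs} are met.

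For item \emph{(i)}, note that when $d=0$ the parameter space $\mathcal G=\C^0=\{0\}$ is trivial; part \emph{(i)} of \Cref{thm:main-abs} then directly yields that $\mathring H_{min}$ is essentially self-adjoint, so there is nothing further to prove. (It is worth pointing out that here one is also using that \Cref{thm:boundary} as stated requires $d>0$, but for $d=0$ the statement that $(\{0\},0,0)$ is a boundary triple for $H_{max}$ is trivially equivalent to the Green identity on $\mathcal D(H_{max})$, which one can either recover from the decomposition \eqref{eq:H.min} recalling that each $h_{m_j,k_j}$ is essentially self-adjoint when $\delta_{k_j}\geq 1/4$ by \cite[Theorems 1.1--1.3]{coulombbiagio}, or simply cite from the same source.)

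For item \emph{(ii)}, I would simply specialise part \emph{(ii)} of \Cref{thm:main-abs} to $\mathcal G=\C^d$: bounded linear operators on $\C^d$ are precisely the matrices in $\C^{d\times d}$, so the general conditions \eqref{eq:conditions.A,B.1-abs}--\eqref{eq:conditions.A,B.2-abs} become the matrix conditions $AB^{*}=BA^{*}$ and $\ker\bigl(\begin{smallmatrix}A&-B\\B&A\end{smallmatrix}\bigr)=0$, and the defining relation \eqref{eq:defn.TAB-abs} becomes \eqref{eq:defn.TAB} once we rename $\Gamma_1\mapsto\Gamma^+$ and $\Gamma_2\mapsto\Gamma^-$. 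The same renaming turns \eqref{eq:descrizione.domini-abs} into \eqref{eq:descrizione.domini}, and $\mathcal U(\mathcal G)=\mathcal U(d)$ is the unitary group of $d\times d$ matrices.

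No genuine obstacle is expected: the entire argument is a translation of the abstract framework to the concrete setting furnished by \Cref{thm:boundary}. The only point requiring a minimal bit of care is the bookkeeping between $(\Gamma_1,\Gamma_2)$ and $(\Gamma^+,\Gamma^-)$ (with the corresponding swap in the roles of $A$ and $B$ in \eqref{eq:defn.TAB}), which is purely notational. Hence the proof is a short deduction and may be phrased as ``\Cref{thm:main} follows by applying \Cref{thm:main-abs} to the boundary triple $(\C^d,\Gamma^+,\Gamma^-)$ provided by \Cref{thm:boundary}.''
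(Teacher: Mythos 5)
Your proposal is correct and follows exactly the route the paper takes: the paper states \Cref{thm:main} as an immediate consequence of \Cref{thm:boundary} combined with the abstract \Cref{thm:main-abs}, which is precisely your deduction (your extra remark on the trivial $d=0$ case is a harmless refinement).
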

\begin{remark}
  It is difficult to obtain the results of \Cref{thm:main} using
  Von Neumann's theory. Indeed, to exploit it, one has to
  find all the solutions to $(H_{max} \pm i) \psi = 0$, that is 
  hard to do for the general class of potentials considered in
  \eqref{eq:def.V}.
  By the way, Theorems 1.1, 1.2,
  1.3 in \cite{coulombbiagio} tell us that $h_{m_j,k_j}$ has deficiency indices
  $(1,1)$ if $\delta_{k_j} <1/4$ and $(0,0)$ if $\delta_{k_j} \geq 1/4$ on
  $C^\infty_c(0,+\infty)^2$.
  Consequently,
  $\mathring H_{min}$ has deficiency indices $(d,d)$, with $d$ defined
  as in \eqref{eq:defn.d}.
   We can now use the Von Neumann's theory, getting that 
   all the self-adjoint extensions of $\mathring H_{min}$
  are in one-to-one  correspondence with the unitary matrices
  $\mathcal{U}(d)$, but we can not provide an explicit bijection.
  Moreover, such correspondence does not describe the self-adjoint extensions: in \Cref{thm:main}  we provide a much clearer characterization of them in terms of the boundary behaviour in the origin of the functions in their domain.
\end{remark}

In the spirit of \cite{adv2013self,estebanloss,muller2016minimax} in the next theorem we select a \emph{distinguished}
self-adjoint extension among the ones defined in \Cref{thm:main}, requiring that
its domain is included in the domain of an appropriate quadratic form.
Let $q: C_c^{\infty}(\R^3;\C^4) \to \R$ be defined as
\begin{equation*}
  q(\psi):= \int_{\R^3} \Big[\abs{x}\abs{-i\alpha\cdot\nabla \psi}^2 
  -
  \abs{x}\abs{\V \psi}^2 
  \Big]\,dx.
\end{equation*}
If $\sup_{x\in \R^3} \abs{x}\abs{\V(x)} \leq 1$,
this form is symmetric and non-negative as a consequence of \eqref{eq:kato.nenciu},
and hence closable: we denote its closure $q$ (with abuse of notation)
and its maximal domain $\mathcal Q$.
In the following theorem, we consider $\V$ as in the class in \eqref{eq:def.V},
to exploit the complete description of all
the self-adjoint extensions  in \Cref{thm:main}.
We show that the condition $\mathcal{D}(T) \subset \mathcal{Q}$
selects a self-adjoint extension $T$ in the case that $\V$ is not a
critical anomalous magnetic potential, i.e.
$\V(x) \neq \pm i \alpha\cdot \hx \beta \abs{x}^{-1}$. Indeed, in this case
this approach does not select any extension, suggesting that it is not
possibile
to use this criterium for the general case.
\begin{theorem}\label{prop:critico}
Let $\mathring H_{min}$ be defined as in
\eqref{eq:defn.minimal.operator}, $\gamma_{k_j}$ as in \eqref{eq:defn.gammakj}, let
$d\in \mathbb{N}$ be defined as in \eqref{eq:defn.d} and assume that $d>0$.
Assume moreover that 
  \begin{equation}
    \label{eq:cond.V}
    \sup_{x\in \R^3} \abs{x}\abs{\V(x)} \leq 1,
    \quad
    \V(x)\neq \pm \frac{i \alpha\cdot \hx \beta}{\abs{x}}.    
  \end{equation}  
   Then there exists only one 
   self-adjoint extension $\mathring H_{min} \subseteq T_{A,B} \subseteq H_{max}$,
     such that $\mathcal D(T_{A,B}) \subseteq \mathcal Q$,
   with $A,B \in \C^{d \times d}$ determined by the following 
  conditions for all $\psi \in \D(H_{max})$: 
  \begin{enumerate}[label=({\roman*})]
  \item\label{item:case1} 
  and for all $(j,m_j,k_j)$ such that
    $0 \neq \gamma_{k_j} = k_j + \lambda$,
    \begin{equation}\label{eq:case1}
      (k_j + \lambda + \gamma_{k_j}) \left(\Gamma^+ (\psi)\right)_{m_j,k_j} =
      (\mu - \nu) \left( \Gamma^- (\psi)\right)_{m_j,k_j};
    \end{equation}
  \item\label{item:case2} for all $(j,m_j,k_j)$ such that
    $0 \neq \gamma_{k_j} \neq k_j + \lambda$:
    \begin{equation}\label{eq:case2}
      (\mu + \nu) \left(\Gamma^+ (\psi)\right)_{m_j,k_j}
      =
      -(k_j + \lambda - \gamma_{k_j}) 
      \left(\Gamma^- (\psi)\right)_{m_j,k_j};
    \end{equation}
    \item\label{item:case3} for all $(j,m_j,k_j)$ such that
      $\gamma_{k_j} =0 $,
      \begin{equation}\label{eq:case3a}
        (k_j + \lambda) \left(\Gamma^+ (\psi)\right)_{m_j,k_j} =
      (\mu - \nu) \left(\Gamma^- (\psi)\right)_{m_j,k_j},
    \end{equation}
    or equivalently
    \begin{equation}\label{eq:case3b}
      (\mu + \nu) \left(\Gamma^+ (\psi)\right)_{m_j,k_j}
      =
      -(k_j + \lambda ) \left(\Gamma^- (\psi)\right)_{m_j,k_j}.
    \end{equation}
  \end{enumerate}

\end{theorem}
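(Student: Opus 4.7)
The plan is to exploit the partial wave decomposition from \eqref{eq:H.min}. Because $\V$ in \eqref{eq:def.V} is spherically symmetric and preserves the basis $\{\Phi^\pm_{m_j,k_j}\}$, the kinetic form $\int \abs{x}\abs{-i\alpha\cdot\nabla \psi}^2\, dx$ and the potential form $\int \abs{x}\abs{\V\psi}^2\, dx$ both split as direct sums of radial forms $q_{m_j,k_j}(f^+,f^-)$ obtained from the action \eqref{eq:dirac.spherical*}. Consequently $\mathcal{Q}$ decomposes correspondingly, and $\mathcal{D}(T_{A,B}) \subseteq \mathcal{Q}$ if and only if the restriction of $T_{A,B}$ to each partial wave has its domain inside $\mathcal{D}(q_{m_j,k_j})$. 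A direct check on the eigenvalues of $\V_0(\hx) := \abs{x}\V(x)$ (using that $\mu\beta$ and $-i\lambda\alpha\cdot\hx\beta$ anticommute and square to $\mu^2$ and $\lambda^2$ respectively) shows that the hypothesis $\sup_x \abs{x}\abs{\V(x)} \leq 1$ is equivalent to $\abs{\nu} + \sqrt{\mu^2+\lambda^2}\leq 1$; this forces $\delta_{k_j} \geq 0$ for every $(j,m_j,k_j)\in I$, so only cases \ref{item:boundary.sottocritico} and \ref{item:boundary.critico} of \Cref{def:boun-trip-sferical} can occur. The inequality \eqref{eq:kato.nenciu} then yields $q_{m_j,k_j}\geq 0$ on $C^\infty_c(0,\infty)^2$ and the form is closable.

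The core analytic step is to determine, for $f \in \D(h^*_{m_j,k_j})$, the precise condition on the asymptotic data $(A^+,A^-)$ from \eqref{eq:limite.delta.12}--\eqref{eq:limit.f.gamma=0} ensuring $f \in \D(q_{m_j,k_j})$. In radial variables the integrand of $q_{m_j,k_j}$ has both kinetic and potential terms of order $r^{-1}$ near $r=0$; the global Kato-Nenciu cancellation is exactly what makes $q_{m_j,k_j}$ well defined, but the singular contributions only match when the coefficient of the more singular mode vanishes. A careful expansion substituting the asymptotic formulas into the integrand shows that it is integrable near $r=0$ if and only if the power $r^{-\gamma_{k_j}}$ is absent (i.e.\ $A^-=0$) in case \ref{item:boundary.sottocritico}, and the logarithmic term is absent (i.e.\ $M_{k_j}\left(\begin{smallmatrix}A^+\\A^-\end{smallmatrix}\right)=0$) in case \ref{item:boundary.critico}. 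Conversely, the remaining non-singular mode is shown to belong to $\mathcal{Q}$ by approximating with cut-offs $\chi_\varepsilon$ supported in $\{r\geq \varepsilon\}$ and exploiting the uniform bound supplied by \eqref{eq:kato.nenciu}.

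These linear conditions on $(A^+,A^-)$ are then transported to $(\Gamma^+,\Gamma^-)$ via the matrices $D_{k_j}$ and $M_{k_j}$. Setting $A^-=0$ in \eqref{eq:def.gamma.spherical.sotto} and using the second branch of \eqref{eq:defn.D} when $\lambda+k_j=\gamma_{k_j}$ gives \eqref{eq:case1}, while the first branch gives \eqref{eq:case2} when $\lambda+k_j\neq\gamma_{k_j}$; similarly, imposing $M_{k_j}\binom{A^+}{A^-}=0$ in \eqref{eq:def.gamma.spherical.critico} and using that $\det M_{k_j}=-\delta_{k_j}=0$ yields the equivalent relations \eqref{eq:case3a}--\eqref{eq:case3b}. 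Assembling these per-partial-wave relations produces diagonal real matrices $A,B\in\C^{d\times d}$; the condition $AB^*=BA^*$ is automatic from realness, and the kernel condition reduces to verifying that the pair of coefficients in each row is never simultaneously zero. This is immediate in cases \ref{item:case1} and \ref{item:case2}, and in case \ref{item:case3} it fails precisely when $\mu=\nu=0$ and $k_j+\lambda=0$ with $\abs{\lambda}=1$, i.e.\ exactly the excluded configuration \eqref{eq:cond.V}. \Cref{thm:main} then produces $T_{A,B}$, and uniqueness follows because the form-domain condition in each partial wave is a single complex-linear constraint.

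Finally, in the excluded case $\V=\pm i\alpha\cdot\hx\beta/\abs{x}$, a direct computation in the critical partial wave ($k_j=\mp 1$) shows that both kinetic and potential contributions to $q_{m_j,k_j}$ reduce to $(\abs{f^+}^2+\abs{f^-}^2)/r$ and cancel pointwise; every element of $\D(h^*_{m_j,k_j})$ already lies in $\D(q_{m_j,k_j})$, so $\D(T)\subseteq\mathcal{Q}$ imposes no restriction and no distinguished extension can be singled out. The main obstacle in the whole argument is the asymptotic analysis of the second step: one must show that the Kato-Nenciu cancellation, which is global and essentially algebraic, survives the substitution of a candidate singular mode only along the one-dimensional direction prescribed by $A^-=0$ (or $M_{k_j}\binom{A^+}{A^-}=0$) and fails along every transverse direction, so that the form domain is a proper codimension-one subspace of $\D(h^*_{m_j,k_j})$ in each relevant partial wave.
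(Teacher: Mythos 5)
Your overall architecture matches the paper's: reduce to partial waves, show $\delta_{k_j}\geq 0$ from $\abs{\nu}+\sqrt{\mu^2+\lambda^2}\leq 1$, rule out the singular mode ($A^-=0$) when $\delta_{k_j}>0$ and the logarithmic mode ($M_{k_j}\binom{A^+}{A^-}=0$) when $\delta_{k_j}=0$, transport these conditions through $D_{k_j}$ and $M_{k_j}$ to get diagonal $A,B$, verify \eqref{eq:conditions.A,B.1-abs}--\eqref{eq:conditions.A,B.2-abs}, and identify the excluded configuration with \eqref{eq:cond.V}. Those parts are correct and essentially identical to the paper.

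The genuine gap is exactly the step you yourself flag as ``the main obstacle'': you assert, without proof, that substituting the asymptotics \eqref{eq:limite.delta.12}--\eqref{eq:limit.f.gamma=0} into the integrand of $q_{m_j,k_j}$ shows integrability fails unless $A^-=0$ (resp.\ the log term is absent). This cannot work as stated, for two reasons. First, $\mathcal{Q}$ is the domain of the \emph{closure} of $q$ from $C^\infty_c$, not the set of functions for which some integrand is finite, so pointwise integrability of a formally substituted expression does not characterize membership in $\mathcal{Q}$; you would also need the direct-sum decomposition of the closed form, which you assume but do not justify. Second, and more seriously, in the critical regime $\sup_x\abs{x}\abs{\V(x)}=1$ (which \eqref{eq:cond.V} allows) the Kato--Nenciu inequality \eqref{eq:kato.nenciu} degenerates: the lower bound $q(\psi)\geq(1-v^2)\int\abs{\psi}^2/\abs{x}\,dx$ gives nothing, so the ``global cancellation'' you invoke provides no quantitative control with which to exclude any mode. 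The paper's proof hinges on a strictly stronger tool you never produce: the refined Hardy-type inequality \eqref{eq:hardy.optimal} of \Cref{lem:hardy}, whose extra term $\frac14\int\abs{\psi(x)-\frac{R}{\abs{x}}\psi(R x/\abs{x})}^2/(\abs{x}\log^2(\abs{x}/R))\,dx$ survives the critical limit and, after the density argument of \Cref{prop:quadratic}, yields the necessary condition \eqref{eq:controllo.logaritmo} for every $\psi\in\mathcal{Q}$. It is \eqref{eq:controllo.logaritmo}, not the form integrand, that kills both the $r^{-\gamma_{k_j}}$ mode and the $\log r$ mode and forces \eqref{eq:case1}--\eqref{eq:case3b}. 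Without this (or an equivalent coercivity estimate with a logarithmic weight), your argument does not close in the case $\abs{\nu}+\sqrt{\mu^2+\lambda^2}=1$, which is the interesting one.

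A smaller point: uniqueness does not follow merely because ``the form-domain condition in each partial wave is a single complex-linear constraint''; the paper concludes uniqueness by noting that the necessary conditions determine the boundary relation completely, and that a self-adjoint linear relation corresponds to a unique $U\in\mathcal{U}(d)$ in \eqref{eq:descrizione.domini}. Your closing discussion of the excluded anomalous-magnetic case is consistent with the paper's remark but is likewise only asserted.
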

\begin{remark}
  In the case that $\V$ is a general hermitian matrix-valued potential
  such that 
  $v:=\sup_{x\in \R^3} \abs{x}\abs{\V(x)} < 1$,
  a classification
  of all the self-adjoint extensions in the spirit of \Cref{thm:main}
  is not available. 
  However,
  it is still true that there exists only one self-adjoint extension
  whose domain in included in $\mathcal{Q}$.
  Indeed, thanks to \eqref{eq:kato.nenciu},
  for all $\psi \in C_c^{\infty}(\R^3)^4$
  \begin{equation}
    q(\psi) \geq 
    \int_{\R^3} \left[\abs{x}\abs{-i\alpha\cdot\nabla \psi}^2 
      -
      v^2 \frac{\abs{\psi}^2 }{\abs{x}}
    \right]\,dx 
    \geq  
    (1- v^2)   \int_{\R^3}
    \frac{\abs{\psi}^2 }{\abs{x}} \,dx,
  \end{equation}
  that immediately implies $\mathcal Q \subset \mathcal D(r^{-1/2})$. 
  If there exists a self-adjoint extension $T$ such that
  $\mathcal{D}(T) \subset \mathcal Q$, then it must be the
  distinguished one, the only one whose domain is contained in $\mathcal
  D(r^{-1/2})$, see \cite{klaus1979characterization}.
Vice-versa, constructing a self-adjoint extension with the property that $\mathcal D(T) \subseteq \mathcal Q$ is not trivial, and it is the subject of \cite{adv2013self}.
\end{remark}
\begin{remark}
  In the case that $\sup_{x\in \R^3} \abs{x}\abs{\V(x)} = 1$,
  the condition $\mathcal D(T) \subset \mathcal Q$
  appears not to be enough to select a self-adjoint
  extension $T$.
  Indeed, for 
  $\V(x)= \pm i \alpha\cdot \hx \beta/\abs{x}$,
  condition \eqref{eq:controllo.logaritmo}
  is true for all the functions in all the domains of self-adjointness.
   A similar phenomenon
was observed in  \cite[Remark 1.10]{coulombbiagio}.
\end{remark}
\begin{remark}\label{rmk:distinguished}
  As an application of \Cref{thm:main} and \Cref{prop:critico}, we
  describe the distinguished self-adjoint extension of the
  Dirac-Coulomb operator
  $H:=H_0 - \frac{\nu}{\abs{x}}\I_4$, for $\abs{\nu} \leq 1$:
  \begin{itemize}
  \item for $0\leq \abs{\nu}\leq \sqrt{3}/2$, $H$ is essentially
    self-adjoint;
  \item for $\sqrt{3}/2 < \abs{\nu} < 1$, we have that $d=4$,
    $\delta_1=\delta_{-1}= 1 - \nu^2 \in (0, 1/4)$,
    and $\Gamma^{\pm} = \left(\Gamma_{-\frac12,1}^{\pm},
    \Gamma_{\frac12,1}^{\pm},\Gamma_{-\frac12,-1}^{\pm},
    \Gamma_{\frac12,-1}^{\pm}\right)$.
    Then the distinguished extension has domain
    \begin{equation}\label{eq:defn.distinguished.explicit}
    \mathcal{D}(T_{A_\nu,\mathbb{I}_4}) =
    \{\psi \in \mathcal{D}(H_{max}): A_\nu\Gamma^+(\psi) =
    \Gamma^-  (\psi) \},
    \end{equation}
    with
    \begin{equation*}
      A_\nu: =
      \begin{pmatrix}
        \frac{\nu}{1 + \sqrt{1-\nu^2}} & 0 & 0 & 0 \\
        0 & \frac{\nu}{1 + \sqrt{1-\nu^2}} & 0 & 0 \\
        0 & 0 &  -\frac{\nu}{1 - \sqrt{1-\nu^2}} & 0 \\ 
        0 & 0 & 0 & -\frac{\nu}{1 - \sqrt{1-\nu^2}} 
      \end{pmatrix};
    \end{equation*}
  \item for $ \abs{\nu} = 1$, we have that $d=4$,
    $\delta_1=\delta_{-1}= 0$, $\Gamma^{\pm} = \left(\Gamma_{-\frac12,1}^{\pm},
    \Gamma_{\frac12,1}^{\pm},\Gamma_{-\frac12,-1}^{\pm},
    \Gamma_{\frac12,-1}^{\pm}\right)$, and
    the distinguished extension has domain
    $\mathcal{D}(T_{\nu\beta,\mathbb{I}_4})$.
  \end{itemize}
In the case that $\V=-1/\abs{x}$, \Cref{prop:critico} selects the
  distinguished self-ajoint extension, as defined in
  \cite{estebanloss}.
  More in general, in the case that $\V$ is as in \eqref{eq:def.V},
  \Cref{prop:critico} selects the distinguished extension, as in
  \cite[Propositions 1.7, 1.8]{coulombbiagio}.
\end{remark}
A fundamental tool in the proof of \Cref{prop:critico}
is the following improved version of \eqref{eq:kato.nenciu},
that we state independently.
\begin{lemma}\label{lem:hardy}
Let $\psi \in C_c^{\infty}(\R^3)^4$.
Then for all $R>0$
\begin{equation}\label{eq:hardy.optimal}
  \int_{\R^3} \abs{x}\abs*{-i\alpha\cdot\nabla \psi(x)}^2 \,dx
    \geq
  \int_{\R^3}  \frac{\abs{\psi(x)}^2}{\abs{x}}\,dx
  +
 \frac{1}{4}
  \int_{\R^3}
  \frac{\abs*{\psi(x) - \frac{R}{\abs{x}}\psi \big(R\frac{x}{\abs{x}}\big)}^2}{\abs{x}\log^2(\abs{x}/R)}\,dx.
\end{equation}
Moreover, the inequality is sharp.
\end{lemma}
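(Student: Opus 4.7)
The plan is to reduce \eqref{eq:hardy.optimal} to a family of one-dimensional Hardy-type inequalities via the partial wave decomposition used throughout the paper and then, after a logarithmic change of variable, to the classical 1D Hardy inequality on the line. To begin, I expand $\psi\in C_c^\infty(\R^3)^4$ in the orthonormal basis $\{\Phi^\pm_{m_j,k_j}\}$ of $L^2(\mathbb{S}^2)^4$,
\begin{equation*}
  \psi(x)=\sum_{j,m_j,k_j}\frac{1}{r}\Bigl(f^+_{m_j,k_j}(r)\Phi^+_{m_j,k_j}(\hx)+f^-_{m_j,k_j}(r)\Phi^-_{m_j,k_j}(\hx)\Bigr),
\end{equation*}
with $f^\pm_{m_j,k_j}\in C_c^\infty(0,\infty)$. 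By \eqref{eq:dirac.spherical} with $\lambda=\mu=\nu=m=0$, the free Dirac operator $-i\alpha\cdot\nabla$ acts on the radial pair $(f^+,f^-)$ through the ladder operators $\mp\partial_r+k_j/r$. Since each $\Phi^\pm_{m_j,k_j}$ depends only on the angular variable, the reflection at radius $R$ obeys
\begin{equation*}
  \tfrac{R}{|x|}\,\psi\bigl(Rx/|x|\bigr)=\sum_{j,m_j,k_j}\frac{1}{r}\Bigl(f^+_{m_j,k_j}(R)\Phi^+_{m_j,k_j}(\hx)+f^-_{m_j,k_j}(R)\Phi^-_{m_j,k_j}(\hx)\Bigr),
\end{equation*}
so that by orthonormality the three integrals in \eqref{eq:hardy.optimal} split diagonally over $(j,m_j,k_j,\pm)$. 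On a fixed mode $(f,k)$, an integration by parts on $C_c^\infty(0,\infty)$ cancels the cross term $\pm k(|f|^2)'$ to yield $\int_0^\infty r|(\pm\partial_r+k/r)f|^2\,dr=\int_0^\infty r|f'|^2\,dr+k^2\int_0^\infty|f|^2/r\,dr$. Since $k_j^2\geq 1$ for every admissible $k_j$, the lemma reduces to the scalar estimate
\begin{equation*}
  \int_0^\infty r|f'(r)|^2\,dr \geq \frac{1}{4}\int_0^\infty\frac{|f(r)-f(R)|^2}{r\log^2(r/R)}\,dr, \qquad f\in C_c^\infty(0,\infty).
\end{equation*}

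Next, the substitution $t:=\log(r/R)$, $g(t):=f(Re^t)\in C_c^\infty(\R)$ converts both sides into $\int_\R|g'(t)|^2\,dt$ and $\tfrac14\int_\R|g(t)-g(0)|^2/t^2\,dt$ respectively, reducing the claim to the 1D inequality
\begin{equation*}
  \int_\R|g'(t)|^2\,dt\geq\frac{1}{4}\int_\R\frac{|g(t)-g(0)|^2}{t^2}\,dt, \qquad g\in C_c^\infty(\R).
\end{equation*}
The function $h(t):=g(t)-g(0)$ vanishes at $0$ and is equal to the constant $-g(0)$ outside the support of $g$, so the classical ground-state substitution $h(t)=|t|^{1/2}v(t)$, applied separately on $(0,\infty)$ and $(-\infty,0)$, produces no boundary contribution at $0$ (from $h(0)=0$) or at $\pm\infty$ (from boundedness of $h$) and delivers the Hardy estimate.

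For the sharpness claim I restrict to a single partial wave with $|k_j|=1$, so that the $(k_j^2-1)$-term dropped in the partial wave reduction vanishes and the only possibly lossy inequality is the final 1D Hardy. Taking $g_n(t):=t^{1/2+\epsilon_n}\chi_n(t)$, with $\epsilon_n\downarrow 0$, $M_n\uparrow\infty$ and $\chi_n\in C_c^\infty(\R)$ a smooth cut-off of $[1,M_n]$ extended by zero on $(-\infty,0)$, a direct computation gives the ratio $\int_\R|g_n'|^2\,dt/\int_\R|g_n|^2/t^2\,dt=(1/2+\epsilon_n)^2+o(1)\to 1/4$. Pulling back via $r=Re^t$ produces $f_n\in C_c^\infty(0,\infty)$ with $f_n(R)=0$, and the single-mode $\psi_n(x):=r^{-1}f_n(r)\Phi^+_{-1/2,1}(\hx)\in C_c^\infty(\R^3)^4$ is a saturating sequence. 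The main technical point is the angular bookkeeping that fuses the spherical decomposition with the reflection at radius $R$; afterwards the analysis collapses to a textbook Hardy argument.
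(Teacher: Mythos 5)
Your proof is correct and lands on the same two structural pillars as the paper's, but it packages them differently and is more self-contained. The paper works directly with the operator identity $-i\alpha\cdot\nabla=-i\alpha\cdot\hx\,(\partial_r+\abs{x}^{-1}-(1+2\SL)\abs{x}^{-1})$, kills the cross term by the symmetric/skew-symmetric commutation argument, bounds the angular part by $\abs{1+2\SL}\geq 1$, and then \emph{cites} the one-dimensional logarithmic Hardy inequality and its sharpness from \cite[Proposition 2.4 (iii), Remark 2.5]{coulombbiagio}. Your partial-wave version is the diagonalized form of exactly this computation: your mode-wise cancellation of $\pm k(\abs{f}^2)'$ is the paper's cross-term cancellation, and your bound $k_j^2\geq 1$ is the paper's $\abs{1+2\SL}\geq 1$ (the $k_j$ being precisely the eigenvalues of the spin-orbit operator on the partial-wave subspaces). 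What your route buys is a from-scratch proof of the radial inequality via the substitution $t=\log(r/R)$ and the ground-state representation $h=\abs{t}^{1/2}v$, plus an explicit saturating sequence on a single $\abs{k_j}=1$ mode with $f_n(R)=0$, both of which the paper outsources to the earlier reference; the paper's route is shorter and avoids tracking the decomposition explicitly.

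One imprecision to fix: for $\psi\in C_c^{\infty}(\R^3)^4$ the radial coefficients $f^{\pm}_{m_j,k_j}$ are \emph{not} in $C_c^{\infty}(0,+\infty)$ in general, since the support of $\psi$ may contain the origin; they are smooth on $[0,+\infty)$, compactly supported, and vanish at $r=0$ only because of the factor $r$ in the decomposition. Consequently $g(t)=f(Re^t)$ need not be compactly supported as $t\to-\infty$; it merely tends to $0$ there. Your argument survives both issues --- the boundary term in the integration by parts vanishes because $f(0)=0$, and your ground-state substitution only uses $h(0)=0$ together with boundedness of $h$, so that $\abs{v}^2=\abs{h}^2/\abs{t}\to 0$ at $t=\pm\infty$ --- but the claims ``$f^\pm\in C_c^\infty(0,\infty)$'' and ``$g\in C_c^\infty(\R)$'' should be restated in this weaker, correct form.
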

\begin{remark}\label{lem:estebanloss}
  \Cref{lem:hardy} can be considered the analogous of
  \cite[Lemma 18]{els2017domains} in the general case
  \eqref{eq:cond.V}.
  Indeed, it allows to exclude a logaritmic decay
  in the origin
  for the functions in the domain of the self-adjoint extension.
 \end{remark}

The paper is organized as follows: in \Cref{sec:proof.boundary} we
prove \Cref{thm:boundary} and in 
 \Cref{sec:quadratic} we prove \Cref{lem:hardy} and \Cref{prop:critico}.

\section{Proof of \texorpdfstring{\Cref{thm:boundary}}{Theorem
    1.5}}\label{sec:proof.boundary}



We firstly prove the following lemma.
\begin{lemma}\label{thm:byparts.spherical.adjoint}
Let $j\in \seq{1/2,3/2,\dots}$, $m_j\in\seq{-j,\dots,j}$,
$k_j\in\seq{j+1/2,-j-1/2}$ such that $(j,m_j,k_j)\in I$
 and let $h^*_{m_j,k_j}$ be defined as in
 \eqref{eq:dirac.spherical*}. 
 Let $\Gamma_{m_j,k_j}^+, \Gamma_{m_j,k_j}^-$ be defined as in \Cref{def:boun-trip-sferical}.
 Then, $\left(\C,\Gamma_{m_j,k_j}^+, \Gamma_{m_j,k_j}^-\right)$ is a boundary triple for $h^*_{m_j,k_j}$.

\end{lemma}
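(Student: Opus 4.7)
The plan is to verify the three conditions of \Cref{defn:boundary} for the triple $(\C,\Gamma^+_{m_j,k_j},\Gamma^-_{m_j,k_j})$ and the closed operator $h^*_{m_j,k_j}$.

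For the Green-type identity, I would integrate by parts on $(0,+\infty)$. The multiplication parts of $h^*_{m_j,k_j}$ (the mass $\pm m$, the diagonal Coulomb $(\nu\pm\mu)/r$, and the off-diagonal Coulomb $(k_j+\lambda)/r$) contribute only symmetrically to the antisymmetric expression $\langle f, h^*_{m_j,k_j} g\rangle - \langle h^*_{m_j,k_j} f, g\rangle$, so only the $\pm\partial_r$ terms leave a boundary contribution and one obtains
\begin{equation*}
\langle f, h^*_{m_j,k_j} g \rangle - \langle h^*_{m_j,k_j} f, g \rangle = \lim_{r\to 0^+}\bigl[\bar f^+(r)\, g^-(r) - \bar f^-(r)\, g^+(r)\bigr],
\end{equation*}
the boundary term at $+\infty$ vanishing by a standard truncation argument since all four quantities are $L^2$ on $(0,+\infty)$. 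I would then substitute the asymptotic expansions \eqref{eq:limite.delta.12}, \eqref{eq:limit.f.gamma=0}, \eqref{eq:f0.gamma<0} from \cite[Proposition 3.1]{coulombbiagio} and verify case by case that this limit equals $\overline{\Gamma^+_{m_j,k_j}(f)}\,\Gamma^-_{m_j,k_j}(g) - \overline{\Gamma^-_{m_j,k_j}(f)}\,\Gamma^+_{m_j,k_j}(g)$. Setting $\omega(u,v):=\bar u_1 v_2 - \bar u_2 v_1$, the subcritical case reduces to the identity $\omega(D_{k_j} u, D_{k_j} v) = \det(D_{k_j})\,\omega(u,v)$, after observing that the $r^{\pm 2\gamma_{k_j}}$ components automatically drop out. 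In the critical case, all $\log^2 r$ and $\log r$ contributions cancel because $M_{k_j}^2=0$ forces $\det M_{k_j}=\mathrm{tr}\,M_{k_j}=0$. In the supercritical case the oscillating $r^{\pm 2 i\gamma_{k_j}}$ contributions cancel thanks to the relation $\bar E_{11} E_{22} = \bar E_{21} E_{12} = (4\gamma_{k_j}^2)^{-1}$, which is a direct computation from \eqref{eq:defn.E} and $(k_j+\lambda)^2+\gamma_{k_j}^2 = \nu^2 - \mu^2$.

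For surjectivity, given $(a,b)\in\C^2$, I would first invert the relevant matrix ($D_{k_j}$, $\I_2$, or $E_{k_j}$) to obtain coefficients $(A^+,A^-)$ producing $(a,b)$ under $\Gamma_{m_j,k_j}^\pm$. A classical local solution $\varphi$ of $h_{m_j,k_j}\varphi = 0$ on $(0,+\infty)$ with the leading profile prescribed by $(A^+,A^-)$ exists by ODE theory; this is essentially the model element used to derive the asymptotics in \cite{coulombbiagio}. Multiplying by a cut-off $\chi\in C^{\infty}_c([0,+\infty))$ with $\chi\equiv 1$ near $0$, one checks that $\chi\varphi\in L^2(0,+\infty)^2$ (using $\gamma_{k_j}<1/2$ in the subcritical case, integrability of $\log^2 r$ in the critical case, and boundedness of $r^{\pm i\gamma_{k_j}}$ in the supercritical case) and that $h^*_{m_j,k_j}(\chi\varphi) = [h^*_{m_j,k_j},\chi]\varphi$ is bounded with support away from $0$, hence in $L^2$. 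Therefore $\chi\varphi\in\D(h^*_{m_j,k_j})$ and realises the desired boundary data. Density of $\ker(\Gamma^+_{m_j,k_j},\Gamma^-_{m_j,k_j})$ is immediate, since $C^{\infty}_c(0,+\infty)^2$ is contained in this kernel (the asymptotic coefficients of compactly supported functions are trivially zero) and is already dense in $L^2(0,+\infty)^2$.

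The main obstacle is the case-by-case analysis of the boundary limit: the summands $\bar f^+ g^-$ and $\bar f^- g^+$ are separately divergent at $r=0^+$ in the subcritical and critical regimes (as $r^{-2\gamma_{k_j}}$ and $\log^2 r$) and merely oscillatory in the supercritical regime, so the argument hinges on identifying all singular or oscillatory components and showing that they cancel. These cancellations are not accidental: the parametrization chosen in \Cref{def:boun-trip-sferical} is precisely the one that turns the natural indicial symplectic form on the 2-dimensional quotient $\D(h^*_{m_j,k_j})/\overline{\D(h_{m_j,k_j})}$ into the standard symplectic form $\omega$ on $\C\oplus\C$, in terms of which the boundary triple identity is formulated.
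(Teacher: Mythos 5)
Your proof follows essentially the same route as the paper's: the Green identity is obtained by integration by parts on $(\epsilon,+\infty)$ with the boundary contribution computed as the limit of the Wronskian-type determinant, evaluated case by case via the asymptotics of \cite[Proposition 3.1]{coulombbiagio}; surjectivity comes from a cut-off model function with prescribed leading profile near the origin; and density of the kernel follows from $C^\infty_c(0,+\infty)^2\subset\ker(\Gamma^+_{m_j,k_j},\Gamma^-_{m_j,k_j})$. The only blemish is a harmless sign slip in the supercritical case, where the direct computation gives $\bar E_{11}E_{22}=\bar E_{21}E_{12}=-(4\gamma_{k_j}^2)^{-1}$ rather than $+(4\gamma_{k_j}^2)^{-1}$; the equality of the two products, which is all the cancellation requires, is correct.
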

\begin{proof}
  In this proof we will suppress the subscripts, since
  $j\in \seq{1/2,3/2,\dots}$, $m_j\in\seq{-j,\dots,j}$,
$k_j\in\seq{j+1/2,-j-1/2}$ are fixed.
  We distinguish various cases.

In the case  $0<\delta<\frac{1}{4}$,
thanks to \cite[Proposition 3.1, \emph{(iii)}]{coulombbiagio}, we have that
$f=(f^+,f^-) \in \mathcal{D}(h_{m_j,k_j}^*)$ if and only if
$f\in H^1(\epsilon,+\infty)^2$ for any $\epsilon>0$, and there exists $(A^+,A^-) \in \C^2$ 
such that \eqref{eq:limite.delta.12} holds true, for $D \in \R^{2\times2}$ defined in \eqref{eq:defn.D}.
Moreover, for any
$\widetilde{f}=(\widetilde{f}^+,\widetilde{f}^-)\in \mathcal{D}(h_{m_j,k_j}^*)$
we have
\begin{equation}\label{eq:det.gamma.<1/2}
\lim_{r\to 0} 
\begin{vmatrix}
f^+(r) & \overline{\widetilde{f}^+(r)}\\
f^-(r) & \overline{\widetilde{f}^-(r)}
\end{vmatrix}
= 
\begin{vmatrix}
D\begin{pmatrix}
A^+\\ A^-
\end{pmatrix}
\overline{
D\begin{pmatrix}
\widetilde A^+\\ \widetilde A^-
\end{pmatrix}
}
\end{vmatrix},
\end{equation}
where, with abuse of notation, we denoted
\begin{equation}\label{eq:abuse}
  \abs*{
    \begin{pmatrix}
      a \\ c
    \end{pmatrix}
    \begin{pmatrix}
      b \\ d
    \end{pmatrix}
  }
  :=
  \begin{vmatrix}
    a & b \\
    c & d
  \end{vmatrix}.
\end{equation}
Then for $f,\widetilde{f}\in \D(h_{m_j,k_j}^*)$, by the dominated convergence theorem, we have that 
\begin{equation}
\label{eq:is.it.symmetric}
\begin{split}
     &\int_0^{+\infty}  f \cdot
     \overline{h^*_{m_j,k_j}(\widetilde{f})}\,dr
     -
     \int_0^{+\infty} h^*_{m_j,k_j} (f) \cdot 
     \overline{\widetilde{f}}\,dr
    \\
    &= \lim_{\epsilon \to 0}
    \int_{\epsilon}^{+\infty} f  \cdot
    \overline{h^*_{m_j,k_j}(\widetilde{f})}\,dr
    -
    \int_{\epsilon}^{+\infty}  h^*_{m_j,k_j} (f)
    \cdot \overline{\widetilde{f}}\,dr
     = \lim_{\epsilon \to 0} 
    \begin{vmatrix}
      f^+(\epsilon) & \overline{\widetilde{f}^+(\epsilon)}\\
      f^-(\epsilon) & \overline{\widetilde{f}^-(\epsilon)}\\
    \end{vmatrix},
    \end{split}
\end{equation}
where in the last equality we used the fact that $f, \widetilde{f}\in H^1(\epsilon,+\infty)^2$.
We get \eqref{eq:boundarytriple.cond1}
combining in \eqref{eq:def.gamma.spherical.sotto},
\eqref{eq:det.gamma.<1/2} and \eqref{eq:is.it.symmetric}.
The surjectivity of the maps $\Gamma^+_{m_j,k_j}, \Gamma^-_{m_j,k_j}$
is easy to show: indeed let $(A^+,A^-)\in \C^2$ and let $f\in C^\infty(0,+\infty)^2$ such that
\[
f(r)= 
\begin{cases}
D
\begin{pmatrix}
A^+ r^\gamma\\
A^- r^{-\gamma}
\end{pmatrix}
 &\text{for}\ r<1,\\
0&\text{for}\ r>2.
\end{cases}
\]
Then $f\in \D(h^*_{m_j,k_j})$ and $\Gamma^\pm_{m_j,k_j}(f)$ are
defined as in \eqref{eq:def.gamma.spherical.sotto}.
Finally, \eqref{eq:boundarytriple.cond3} descends from the fact that $C^\infty_c(0,+\infty)^2\subset\operatorname{ker}(\Gamma^+_{m_j,k_j},\Gamma^-_{m_j,k_j})$.

Let us now consider the case that $\delta=0$. Thanks to
\cite[Proposition 3.1, \emph{(iv)}]{coulombbiagio}, 
$f=(f^+,f^-)\in \mathcal{D}(h_{m_j,k_j}^*)$ if and only if
$f\in H^1(\epsilon,+\infty)^2$ for any $\epsilon>0$, and there exists $(\Gamma_{m_j,k_j}^+(f), \Gamma_{m_j,k_j}^-(f)):=
(A^+, A^-) \in \C^2$ such that
\eqref{eq:limit.f.gamma=0} holds true, with $M\in \R^{2\times2}$,
$M^2=0$ defined as in \eqref{eq:defn.M}.
Moreover, for any
$\widetilde{f}=(\widetilde{f}^+,\widetilde{f}^-)\in \mathcal{D}(h_{m_j,k_j}^*)$
we have
\begin{equation}\label{eq:det.gamma.=0}
  \lim_{r\to 0}
    \begin{vmatrix}
     f^+(r) & \overline{\widetilde{f}^+(r)} \\
     f^-(r) & \overline{\widetilde{f}^-(r)}
  \end{vmatrix}
  =
  \begin{vmatrix}
    \Gamma^+(f) & \overline{ \Gamma^+(\widetilde f)} \\ 
    \Gamma^-(f) & \overline{\Gamma^-(\widetilde f)} 
  \end{vmatrix}.
\end{equation}
Reasoning as in the previous case, we get
\eqref{eq:boundarytriple.cond1}. Finally, \eqref{eq:boundarytriple.cond2} and \eqref{eq:boundarytriple.cond3} are proved as in
the previous case.

Let us lastly assume that $\delta<0$. In this case, thanks to \cite[Proposition 3.1, \emph{(v)}]{coulombbiagio} we have that
$f=(f^+,f^-)\in \mathcal{D}(h_{m_j,k_j}^*)$ if and only if  $f\in H^1(\epsilon,+\infty)^2$ for any $\epsilon>0$, and there exists $(A^+,A^-) \in \C^2$ such that
\eqref{eq:f0.gamma<0}  holds true, with $E \in \C^{2\times2}$ defined
as in \eqref{eq:defn.E}.
Moreover, for any
$\widetilde{f}=(\widetilde{f}^+,\widetilde{f}^-)\in \mathcal{D}(h^*_{m_j,k_j})$, with the same notation of \eqref{eq:abuse}, we get
\begin{equation}\label{eq:det.gamma.<0}
\lim_{r\to 0} 
\begin{vmatrix}
f^+(r) & \overline{\widetilde{f}^+(r)}\\
f^-(r) & \overline{\widetilde{f}^-(r)}
\end{vmatrix}
= 
\begin{vmatrix}
E\begin{pmatrix}
A^+\\ A^-
\end{pmatrix}
\overline{
E\begin{pmatrix}
\widetilde A^+\\ \widetilde A^-
\end{pmatrix}
}
\end{vmatrix},
\end{equation}
Due to \eqref{eq:def.gamma.spherical.sopra},
one get \eqref{eq:boundarytriple.cond1}, \eqref{eq:boundarytriple.cond2} and \eqref{eq:boundarytriple.cond3} reasoning as before.
\end{proof}

We are now ready to prove \Cref{thm:boundary}.
\begin{proof}[Proof of \Cref{thm:boundary}]
Let us start proving the condition \eqref{eq:boundarytriple.cond1}
in \Cref{defn:boundary}. Let
for any $\psi,\widetilde{\psi}\in\D(H_{max})$
such that
 \begin{equation}\label{eq:dec.H.max}
 H_{max}\psi=
 \sum_{j,m_j,k_j}
 h^*_{m_j,k_j}
 f_{m_j,k_j},
 \quad
 H_{max}\widetilde \psi=
 \sum_{j,m_j,k_j}
 h^*_{m_j,k_j}
\widetilde f_{m_j,k_j},
 \end{equation}
for appropriate $f_{m_j,k_j}$ and $\widetilde{f}_{m_j,k_j}$ in $\D(h^*_{m_j,k_j})$. Then
\begin{equation*}
\begin{split}
&\langle  \psi, H_{max}\widetilde{\psi}\rangle_{L^2(\R^3)^4}
-
\langle H_{max} \psi,\widetilde{\psi} \rangle_{L^2(\R^3)^4}
\\
&= 
\sum_{j,m_j,k_j}
\langle f_{m_j,k_j}, h^*_{m_j,k_j}\widetilde{f}_{m_j,k_j}\rangle_{L^2(0,\infty)^2}
-
\langle h^*_{m_j,k_j} f_{m_j,k_j},\widetilde{f}_{m_j,k_j}\rangle_{L^2(0,\infty)^2}\\
&=
\sum_{
\substack{j,m_j,k_j  \\ (k_j+\lambda)^2+\mu^2-\nu^2<1/4}
}
\langle f_{m_j,k_j}, h^*_{m_j,k_j}\widetilde{f}_{m_j,k_j}\rangle_{L^2(0,\infty)^2}
-
\langle h^*_{m_j,k_j} f_{m_j,k_j},\widetilde{f}_{m_j,k_j}\rangle_{L^2(0,\infty)^2},
\end{split}
\end{equation*}
where in the last equality we used the fact that $h^*_{m_j,k_j}$ is
self-adjoint when  $(k_j+\lambda)^2+\mu^2-\nu^2\geq 1/4$, as proved in \cite[Theorem 1.1]{coulombbiagio}.
Thanks to \Cref{thm:byparts.spherical.adjoint}, we conclude that 
\begin{equation}\label{eq:byparts.decomposition.spherical}
  \begin{split}
    \langle  \psi, H_{max}\widetilde{\psi}\rangle_{L^2(\R^3)^4}
    &-
    \langle H_{max} \psi,\widetilde{\psi} \rangle_{L^2(\R^3)^4}
    \\ &=
    \sum_{(j,m_j,k_j)\in I}
    \Gamma^+_{m_j,k_j} (f)\cdot\overline{\Gamma_{m_j,k_j}^- (\tilde{f})}
    -
    \Gamma^-_{m_j,k_j} (f)\cdot\overline{ \Gamma_{m_j,k_j}^+ (\tilde{f})},
  \end{split} 
\end{equation}
that gives immediately \eqref{eq:boundarytriple.cond1}. 


The surjectivity of $\Gamma^+$ and $\Gamma^-$ descends immediately from
the surjectivity of any $\Gamma^+_{m_j,k_j}$ and $\Gamma^-_{m_j,k_j}$
that has been showed in \Cref{thm:byparts.spherical.adjoint}. 

Finally, since $C_c^{\infty}(\R^3\setminus\{0\})^4 \subseteq \ker (\Gamma^+, \Gamma^-)$, we deduce the condition \eqref{eq:boundarytriple.cond2}.
\end{proof}
\section{Proof of \texorpdfstring{\Cref{prop:critico}}{Theorem 1.9}}
\label{sec:quadratic}
In this Section we prove \Cref{lem:hardy}, the following \Cref{prop:quadratic}
and finally \Cref{prop:critico}.
\begin{proof}[Proof of \Cref{lem:hardy}]
By direct computation  (see for example
   \cite[Equation (4.102)]{thaller})
  \begin{equation*}
    -i\alpha\cdot\nabla =  
    -i\alpha \cdot \hx
    \left(\partial_r + \frac{1}{\abs{x}} - \frac{1 + 2\SL}{\abs{x}}\right),
  \end{equation*}
  where $\S$ is the \emph{spin angular momentum operator} 
\begin{equation}
{\S}=
\frac{1}{2}
\begin{pmatrix}
\sigma & 0\\
0 & \sigma
\end{pmatrix}
.
\end{equation}
Consider  $\psi \in C_c^{\infty}(\R^3;\C^4)$.
Since $i\alpha\cdot \hx$ is a unitary matrix, we have
\begin{equation*}
  \begin{split}
    \int_{\R^3} \abs{x}\big\vert -i\alpha\cdot\nabla \psi \big\vert^2\,dx
    = &
    \int_{\R^3} \abs{x}
    \abs*{\Big(\partial_r + \frac{1}{\abs{x}} - \frac{1 + 2\SL}{\abs{x}}\Big)\psi}^2\,dx 
    \\
    = & 
    \int_{\R^3} \abs{x}
    \abs*{\Big(\partial_r + \frac{1}{\abs{x}}\Big)\psi}^2 \,dx
    +
    \int_{\R^3}
    \abs*{\frac{1 + 2\SL}{\abs{x}}\psi}^2 \,dx
    \\
    & 
    -2 \Re 
    \int_{\R^3} 
    \Big(\partial_r + \frac{1}{\abs{x}}\Big)\psi
\,
    \overline{
      (1 + 2\SL)\psi
    }
    \,dx.
  \end{split}
\end{equation*}
It is standard (see for example \cite[Lemma 2.1]{coulombluis}) to show that the last term in the previous equation vanishes, indeed 
$1 + 2\SL$ and $\partial_r + \frac{1}{\abs{x}}$ are respectively symmetric and skew-symmetric 
on $C_c^{\infty}(\R^3)^4$, and the two operators commute with each other.

Let $\phi:= \abs{x}\psi$. We have that 
$\partial_r \phi = \abs{x} (\partial_r + \abs{x}^{-1}) \psi$ and consequently
\begin{equation*}
    \int_{\R^3} \abs{x}
    \abs*{\Big(\partial_r + \frac{1}{\abs{x}}\Big)\psi}^2 \,dx
    =
    \int_0^{+\infty} \int_{\mathbb S^2} r \abs{\partial_r \phi(r\omega)}^2 \,d\omega dr
\end{equation*}
Thanks to Proposition 2.4, \emph{(iii)} in \cite{coulombbiagio}, 
\begin{equation*}
       \int_{\mathbb S^2} \int_0^{+\infty} r \abs{\partial_r \phi(r\omega)}^2 \, dr d\omega
      \geq 
      \frac14
       \int_{\mathbb S^2}
       \int_0^{+\infty}
       \frac{\abs{\phi(r\omega) - \phi(R \omega)}^2}{r \log^2(r/R)} \,dr d\omega.
\end{equation*}
This inequality is sharp, as underlined in \cite[Remark 2.5]{coulombbiagio}.
Observing that $\abs{1+2\SL} \geq 1$, we finally get the thesis. 
\end{proof}

\begin{proposition}\label{prop:quadratic}
  For all $\psi \in \mathcal{Q}$
  \begin{equation}\label{eq:controllo.logaritmo}
  \int_{\{\abs{x}<1\}} \frac{\abs{\psi(x)}^2}{\abs{x}\log^2{\abs{x}}}\, dx < +\infty.
\end{equation}
\end{proposition}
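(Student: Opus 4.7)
The plan is to first establish a uniform estimate
\[
\int_{\{\abs{x}<1/4\}}\frac{\abs{\psi(x)}^2}{\abs{x}\log^2\abs{x}}\,dx \leq C\bigl(q(\psi)+\norm{\psi}_{L^2}^2\bigr)\quad\text{for every }\psi\in C_c^\infty(\Rt)^4,
\]
with $C$ independent of $\psi$, and then propagate it to all of $\mathcal Q$ by the closure construction. Since the logarithmic singularity relevant for \eqref{eq:controllo.logaritmo} is at $\abs{x}=0$, the contribution from the annulus $\{1/4\leq\abs{x}<1\}$ is handled separately (there the weight is integrable against bounded quantities, using Kato--Nenciu on the annulus).

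To get the uniform estimate I would combine \Cref{lem:hardy} with the hypothesis $\sup_x\abs{x}\abs{\V(x)}\leq 1$, which yields $\int_{\Rt}\abs{x}\abs{\V\psi}^2\,dx\leq \int_{\Rt}\abs{\psi}^2/\abs{x}\,dx$; thus \eqref{eq:hardy.optimal} reduces to
\[
4q(\psi)\;\geq\;\int_{\Rt}\frac{\bigl|\psi(x)-(R/\abs{x})\,\psi(R\hx)\bigr|^2}{\abs{x}\log^2(\abs{x}/R)}\,dx,\qquad R>0.
\]
Applying the elementary inequality $\abs{a}^2\leq 2\abs{a-b}^2+2\abs{b}^2$ with $a=\psi(x)$ and $b=(R/\abs{x})\psi(R\hx)$, restricting to $\{\abs{x}<1/2\}$, and computing in polar coordinates the radial integral $\int_0^{1/2}dr/(r\log^2(r/R))=1/\log(2R)$, I obtain
\[
\int_{\{\abs{x}<1/2\}}\frac{\abs{\psi}^2}{\abs{x}\log^2(\abs{x}/R)}\,dx\;\leq\;8\,q(\psi)+\frac{2R^2}{\log(2R)}\int_{\mathbb{S}^2}\abs{\psi(R\omega)}^2\,d\omega.
\]

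The \emph{main obstacle} is that the spherical trace $\int_{\mathbb S^2}\abs{\psi(R\omega)}^2\,d\omega$ on a \emph{fixed} sphere of radius $R$ is not controlled by $\norm{\psi}_{L^2}$. I would circumvent this by averaging the inequality over $R\in[1,2]$: by Fubini,
\[
\int_1^2\int_{\mathbb S^2}\abs{\psi(R\omega)}^2\,d\omega\,dR\;=\;\int_{\{1<\abs{x}<2\}}\frac{\abs{\psi(x)}^2}{\abs{x}^2}\,dx\;\leq\;\norm{\psi}_{L^2}^2,
\]
while on the left, an elementary check shows $\log^2(\abs{x}/R)\leq \tfrac{9}{4}\log^2\abs{x}$ whenever $\abs{x}<1/4$ and $R\in[1,2]$, so the $R$-average produces the lower bound $\tfrac{4}{9}\int_{\{\abs{x}<1/4\}}\abs{\psi}^2/(\abs{x}\log^2\abs{x})\,dx$. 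Combining the two yields the claimed uniform estimate (with explicit constant).

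To finish, given $\psi\in\mathcal Q$ I take a sequence $\psi_n\in C_c^\infty(\Rt)^4$ with $\psi_n\to\psi$ in $L^2(\Rt)^4$ and $q(\psi_n-\psi_m)\to 0$. The uniform estimate applied to $\psi_n-\psi_m$ shows $\{\psi_n\}$ is Cauchy in the weighted Hilbert space $L^2\bigl(\{\abs{x}<1/4\},\,dx/(\abs{x}\log^2\abs{x})\bigr)$; by uniqueness of $L^2$-limits (passing to an a.e.\ convergent subsequence) the weighted limit agrees with $\psi$ a.e., so $\psi$ itself lies in this weighted space, giving \eqref{eq:controllo.logaritmo}.
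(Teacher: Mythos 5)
Your argument is correct where it matters, and it takes a genuinely different route from the paper. The paper first extends the inequality \eqref{eq:prop.q} from $C_c^\infty$ to all of $\mathcal Q$ by showing that the quotients $\chi_j=(\psi_j-\psi_j(R\,\cdot/\abs{\cdot}))/\log(\abs{\cdot}/R)$ form a Cauchy sequence in $L^2(\abs{x}^{-1}dx)$ and identifying the limit almost everywhere; it then performs the same splitting $\abs{\psi}^2\leq 2\abs{\psi-b}^2+2\abs{b}^2$ that you use, bounding the second term by the spherical trace $\int_{\mathbb S^2}\abs{\psi(R\omega)}^2\,d\omega$ at the \emph{fixed} radius $R$. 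Your averaging over $R\in[1,2]$ replaces that trace by $\norm{\psi}_{L^2}^2$ and yields a clean a priori bound $\int_{\{\abs{x}<1/4\}}\abs{\psi}^2/(\abs{x}\log^2\abs{x})\,dx\leq C(q(\psi)+\norm{\psi}_{L^2}^2)$ on test functions, after which the passage to $\mathcal Q$ is a routine Cauchy-sequence argument in the weighted space. This is arguably more robust than the paper's route: for a fixed $R$ the restriction of a general $\psi\in\mathcal Q$ to the sphere $\{\abs{x}=R\}$ is not a priori meaningful (almost-everywhere convergence in $\R^3$ gives no information on a measure-zero sphere), a subtlety the paper's identification of the limit of $\chi_j$ glosses over and your Fubini step sidesteps entirely.

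One caveat. Your parenthetical claim that the annulus $\{1/4\leq\abs{x}<1\}$ is harmless because ``the weight is integrable against bounded quantities'' there is false: $\int_{1/4}^{1}dr/(r\log^2 r)$ diverges at $r=1$, since $\log^2 r\sim(r-1)^2$. In fact \eqref{eq:controllo.logaritmo} as literally written fails already for a smooth compactly supported $\psi$ that does not vanish on the unit sphere; the statement should be read with the integral taken over $\{\abs{x}<1/2\}$, say, or with the weight $\log^2(\abs{x}/R)$ for some $R>1$ (which is what the paper's own final display actually controls). Since the only use of the proposition is to rule out singular or logarithmic behaviour of $\psi$ \emph{at the origin}, the meaningful content is precisely the integrability near $x=0$ that your estimate on $\{\abs{x}<1/4\}$ delivers, so nothing essential is missing from your argument --- but you should either drop the remark about the annulus or restate the conclusion on a smaller ball.
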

\begin{proof}
We show that for all $\psi \in \mathcal Q$
\begin{equation}\label{eq:prop.q}
  q(\psi) \geq
   \frac{1}{4}
  \int_{\R^3}
  \frac{\abs*{\psi(x) - \frac{R}{\abs{x}}\psi \big(R\frac{x}{\abs{x}}\big)}^2}{\abs{x}\log^2(\abs{x}/R)}\,dx.
\end{equation}
Since 
$Q=\overline{C^\infty_c(\R^3)}^{\norm{\cdot}_{q}}$,
with $\norm{\cdot}_q^2 := q(\cdot) + \norm{\cdot}_2^2$, there
exists a sequence $(\psi_j)_j
\subset C_c^{\infty}(\R^3)$ such that
$\norm{\psi-\psi_j}_q \to 0$ and $\psi - \psi_j \to 0$ 
almost everywhere as $j \to +\infty$.
Since \eqref{eq:hardy.optimal} holds for $\psi_j - \psi_m \in
C_c^{\infty}(\R^3)$,  $(\chi_j)_j$ is a Cauchy sequence
in $L^2(\R^3,\abs{x}^{-1}dx)$, for
\begin{equation*}
  \chi_j(x) := \frac{\psi_j(x) - \psi_j (R {x}/{\abs{x}})}%
  {\log(\abs{x}/R)}.
\end{equation*}
Consequently, $\chi_j \to \chi \in L^2(\R^3,\abs{x}^{-1}dx)$.
On the other hand, since $\psi_j \to \psi$  almost
everywhere, then $\chi_j \to \frac{\psi - \psi\big(R
  \frac{\cdot}{\abs{\cdot}}\big)}{\log(\abs{x}/R)}$ almost everywhere,
and we conclude that $\chi_j \to \frac{\psi - \psi\big(R
  \frac{\cdot}{\abs{\cdot}}\big)}{\log(\abs{x}/R)}$ in
$L^2(\R^3,\abs{x}^{-1}dx)$. In conclusion, \eqref{eq:prop.q}
holds for $\psi \in \mathcal Q$.

Consequently,
  \begin{equation*}
   \int_{\{\abs{x}<1\}}\!\!
   \frac{\abs{\psi(x)}^2}{\abs{x}\log^2(\abs{x}/R)}\,dx 
   \leq
   2 \int_{\{\abs{x}<1\}}\!\!
   \frac{\abs*{\psi(x)-  \frac{R}{\abs{x}}
       \psi\big(R \frac{x}{\abs{x}}\big)}^2}%
   {\abs{x}\log^2(\abs{x}/R)}\,dx 
   + 2
     \int_0^1
     \frac{\frac{R^2}{r^2}
       \int_{\{\abs{x}=r\}}\!\!
       \abs*{\psi\big(R \frac{x}{|x|}\big)}^2 \,dS_x}%
   {r \log^2(r/R)}\,dr.
 \end{equation*}
The second term at right hand side is finite, since the numerator is
constant with respect to $r \in (0,1)$ and $(r \log^2{r})^{-1}$ is
integrable in the origin, and the first term at right
hand side is finite, as it is shown above.
\end{proof}

We can now finally prove \Cref{prop:critico}.
\begin{proof}[Proof of \Cref{prop:critico}]
  We firstly show that
  $\gamma_{k_j} \geq 0$ for all $j=1/2,3/2,\dots$,
  that is 
  $(k+\lambda)^2 + \mu^2 - \nu^2 \geq 0$
  for all $k \in \Z \setminus \{0\}$.
  Indeed, since $\abs{x}\abs{\V(x)} = \abs{\nu} + \sqrt{\mu^2 +
    \lambda^2} \leq 1$, then $\nu^2 \leq 1 + \mu^2 + \lambda ^2 - 2
  \sqrt{\mu^2 + \lambda^2}$.
  Moreover, since $\abs{\lambda} \leq 1$, then
  $M:=\min_{k \in \Z \setminus \{0\}} (k+\lambda)^2 + \mu^2 - \nu^2
  = (1-\abs{\lambda})^2 + \mu^2 - \nu^2$.
  Assume by contradiction that $M<0$. Then
  $(1-\abs{\lambda})^2 + \mu^2 < \nu^2 \leq 
  1 + \mu^2 + \lambda ^2 - 2
  \sqrt{\mu^2 + \lambda^2}$,
  that is $ \abs{\lambda} > \sqrt{\mu^2 + \lambda^2}$ and this is
  absurd.
  Incidentally we remark that $M = 0$ only if $\mu=0$.

We denote 
\begin{align*}
I_1&:=\seq{(j,m_j,k_j)\in I:0 \neq \gamma_{k_j} = k_j + \lambda},\\
I_2&:=\seq{(j,m_j,k_j)\in I: 0 \neq \gamma_{k_j} \neq k_j + \lambda},\\
I_3&:=\seq{(j,m_j,k_j)\in I:\gamma_{k_j} =0}.
\end{align*}

Following \eqref{eq:ordering},  we identify
  \begin{equation*}
  s \in \seq{1,\dots,d} \leftrightarrow (j,m_j,k_j) \in I.
  \end{equation*}

  Thanks to this, we have that $\{I_1,I_2,I_3\}$ is a partition of $\{1,\dots,d\}$.

  In the following we determine $A,B \in
  \C^{d\times d}$ in such a way that $\mathcal{D}(T_{A,B}) \subseteq
  \mathcal{Q}$.
  Let $\psi$ be a generic element in $\mathcal{D}(T_{A,B})$.
  Thanks to \Cref{prop:quadratic}, the condition 
  $\mathcal{D}(T_{A,B}) \subseteq \mathcal{Q}$ implies that $\psi$ verifies
  \eqref{eq:controllo.logaritmo}.
  Following the notations  of \Cref{thm:boundary},
  we denote
  \begin{equation*}
    \begin{split}
      \psi(x) =&
      \sum_{j=\frac{1}{2},\frac{3}{2},\dots}^\infty
      \,   \sum_{m_j=-j}^j \,
      \sum_{k_j=\pm(j+1/2)} \,
      \frac{1}{r}\left(
        f^+_{m_j,k_j}(r)\Phi^+_{m_j,k_j}(\hx)+
        f^-_{m_j,k_j}(r)\Phi^-_{m_j,k_j}(\hx)\right),
      \\
      f_{m_j,k_j} =& \, (f^+_{m_j,k_j}, f^-_{m_j,k_j}).
    \end{split}
  \end{equation*}
  For all $(j,m_j,k_j) \in I_1\cap I_2$, we have that $f_{m_j,k_j}$ verifies \eqref{eq:limite.delta.12}:
  since the singular behaviour is not allowed by
  \eqref{eq:controllo.logaritmo},
  we have necessarily that $A^- = 0$.
  Thanks to \eqref{eq:def.gamma.spherical.sotto},
  we have that this is equivalent to \eqref{eq:case1} when $(j,m_j,k_j)\in I_1$
  and equivalent to \eqref{eq:case2} when $(j,m_j,k_j)\in I_2$.
  We define the matrices $A$ and $B$ accordingly:
  \[
  \begin{array}{lll} 
    A_{ss}:= k_j + \lambda + \gamma_{k_j}, &   B_{ss}:= \mu - \nu, 
    &\text{ for }s\sim (j,m_j,k_j)\in I_1 \\
    A_{ss}:= \mu + \nu,    & B_{ss}:=  -(k_j + \lambda - \gamma_{k_j}), &
    \text{ for } s\sim (j,m_j,k_j)\in I_2, \\
    A_{st}=B_{st}=0, & &\text{ for }s \sim (j,m_j,k_j)\in I_1\cup I_2, \,
                         1\leq t\leq d, t\neq s.
  \end{array}
\]
  For all $(j,m_j,k_j) \in I_3$, we have that $f_{m_j,k_j}$ verifies \eqref{eq:limit.f.gamma=0}:
  since the logarithmic behaviour is not allowed by
  \eqref{eq:controllo.logaritmo},
  we have necessarily that $\text{Ran}(\Gamma_{m_j,k_j}^+,\Gamma_{m_j,k_j}^-)\subseteq \ker M_{k_j}$.
  This gives \eqref{eq:case3a} and \eqref{eq:case3b}: they are equivalent since
  $M$ has rank 1.
  Using the identification $s\sim (j,m_j,k_j)$,
  we define $A$ and $B$ accordingly:
  \begin{equation}
    A_{ss}:= k_j + \lambda, \quad B_{ss}:=\mu - \nu,
    \quad \text{ for }s s\sim (j,m_j,k_j)\in I_3, 
  \end{equation}
  or equivalently
  \begin{equation}
    A_{ss}:= \mu + \nu, \quad B_{ss}:= -(k_j + \lambda ),
    \quad \text{ for }s\sim (j,m_j,k_j)\in I_3.
  \end{equation}
  and $A_{st}=B_{st}=0$, for $s \sim (j,m_j,k_j)\in I_3$, 
  $t\in\{1,\dots,d\}, t\neq s$.
  
  In order to show that the extension that we have built is
  self-adjoint, we check the conditions
  \eqref{eq:conditions.A,B.1-abs} and \eqref{eq:conditions.A,B.1-abs} in
  \Cref{thm:main-abs}:
  since $A$ and $B$ are real and diagonal
  we have that
  \begin{equation*}
    AB^* = AB = BA = BA^*,
  \end{equation*}
  that is \eqref{eq:conditions.A,B.1-abs}.
  In order to show that \eqref{eq:conditions.A,B.2-abs}, we
  show equivalently that $\det(AA^* + BB^*)\neq 0$ (see
  \cite[Section 125, Theorem 4]{akhiezer2013theory}).
  Indeed, the matrix $AA^* + BB^*$ is diagonal and the elements of
  the diagonal equal $C_{ss}:=(A_{ss})^2 + (B_{ss})^2$, for $s=1,\dots,d$.
  For $s \in I_1$ we have that
  $C_{ss} = (k_j + \lambda + \gamma_{k_j})^2 + (\mu - \nu)^2 \geq
  (k_j + \lambda + \gamma_{k_j})^2 = 4\gamma_{k_j}^2 > 0$.
  For $s \in I_2$ we have that
  $C_{ss}=(\mu + \nu)^2 +  (k_j + \lambda + \gamma_{k_j})^2
  \geq (k_j + \lambda - \gamma_{k_j})^2 > 0$.
  Finally, for $s \in I_3$, we have that
  $C_{ss} = (k_j + \lambda)^2 + (\mu - \nu)^2$ or
  $C_{ss} = (k_j + \lambda)^2 + (\nu + \mu)^2$:
  in both cases, $C_{ss} = 0$ if and only if 
  $(\nu,\mu,\lambda)=(0,0,1)$ or
  $(\nu,\mu,\lambda)=(0,0,-1)$, but this is excluded by \eqref{eq:cond.V}.

  The linear relation associated to $A,B$ determines uniquely a
  unitary matrix $U \in \mathcal{U}(d)$ such that $T_{A,B}=T_U$,
  defined as in \eqref{eq:descrizione.domini}, see
  \cite[Section 2]{PANKRASHKIN2006207}, \cite[Theorem
  4.6]{arens1961operational}, \cite[Theorem
  3.1.4]{gorbachuk2012boundary}.
  This implies that  $T_{A,B}$ is the unique self-adjoint extension with
  the required properties and concludes the proof.
\end{proof}



\section*{Acknowledgements}
This work was mainly developed while both authors were employed at \emph{BCAM - Basque Center for Applied Mathematics}, 
and they were supported by
ERCEA Advanced Grant 2014 669689 - HADE, by the MINECO project MTM2014-53850-P, by
Basque Government project IT-641-13 and also by the Basque Government through the BERC
2018-2021 program and by Spanish Ministry of Economy and Competitiveness MINECO: BCAM
Severo Ochoa excellence accreditation SEV-2017-0718. 
The first author
also acknowledges the Istituto Italiano di Alta Matematica ``F.~Severi'' and
 the Czech Science Foundation 
(GA\v{C}R) within the project 17-01706S.
The second author also has received funding from
the European Research Council (ERC) under the European Union’s Horizon 2020
research and innovation programme (grant agreement MDFT No 725528 of M.L.).


\begin{thebibliography}{10}



\bibitem{akhiezer2013theory}
{\sc Akhiezer, N., Glazman, I.}
\newblock{Theory of linear operators in Hilbert space, Vol.~II}.
\newblock{Courier Corporation}, 2013.


\bibitem{arens1961operational}
{\sc Arens, R.}
\newblock {Operational calculus of linear relations}.
\newblock {\em Pacific Journal of Mathematics}, 11.1 (1961).

\bibitem{arrizabalaga2011distinguished}
{\sc Arrizabalaga, N.}
\newblock {Distinguished self-adjoint extensions of Dirac operators via
  Hardy-Dirac inequalities}.
\newblock {\em Journal of Mathematical Physics 52}, 9 (2011), 092301.

\bibitem{adv2013self}
{\sc Arrizabalaga, N., Duoandikoetxea, J., and Vega, L.}
\newblock {Self-adjoint extensions of Dirac operators with Coulomb type
  singularity}.
\newblock {\em Journal of Mathematical Physics 54}, 4 (2013), 041504.

\bibitem{bruning2008spectra}
{\sc Br{\"u}ning, J., Geyler, V., and Pankrashkin, K.}
\newblock Spectra of self-adjoint extensions and applications to solvable
  schr{\"o}dinger operators.
\newblock {\em Reviews in Mathematical Physics 20}, 01 (2008), 1--70.

\bibitem{burnap1981dirac}
{\sc Burnap, C., Brysk, H., and Zweifel, P.}
\newblock {Dirac Hamiltonian for strong Coulomb fields}.
\newblock {\em Il Nuovo Cimento B (1971-1996) 64}, 2 (1981), 407--419.

\bibitem{coulombbiagio}
{\sc Cassano, B., and Pizzichillo, F.}
\newblock {Self-adjoint extensions for the Dirac operator with Coulomb-type
  spherically symmetric potentials}.
\newblock {\em Letters in Mathematical Physics\/} (2018), 1--33.

\bibitem{coulombluis}
{\sc Cassano, B., Pizzichillo, F., and Vega, L.}
\newblock {A Hardy-type inequality and some spectral characterizations for the
  Dirac--Coulomb operator}.
\newblock {\em arXiv preprint: arXiv:1810.01309\/} (2018).

\bibitem{de2008intermediate}
{\sc de~Oliveira, C.~R.}
\newblock {\em Intermediate spectral theory and quantum dynamics}, vol.~54.
\newblock Springer Science \& Business Media, 2008.

\bibitem{els2017domains}
{\sc Esteban, M.~J., Lewin, M., and S{\'e}r{\'e}, E.}
\newblock {Domains for Dirac-Coulomb min-max levels}.
\newblock {\em arXiv preprint arXiv:1702.04976\/} (2017).

\bibitem{estebanloss}
{\sc Esteban, M.~J., and Loss, M.}
\newblock {Self-adjointness for Dirac operators via Hardy-Dirac inequalities}.
\newblock {\em Journal of Mathematical Physics 48}, 11 (2007), 112107.

\bibitem{facchi2018self}
{\sc Facchi, P., Garnero, G., and Ligab{\`o}, M.}
\newblock Self-adjoint extensions and unitary operators on the boundary.
\newblock {\em Letters in Mathematical Physics 108}, 1 (2018), 195--212.

\bibitem{gallone2017self}
{\sc Gallone, M.}
\newblock {Self-Adjoint Extensions of Dirac Operator with Coulomb Potential}.
\newblock In {\em Advances in Quantum Mechanics}. Springer, 2017, pp.~169--185.

\bibitem{gallonemichelangeli2017self}
{\sc Gallone, M., and Michelangeli, A.}
\newblock {Self-adjoint realisations of the Dirac-Coulomb Hamiltonian for heavy
  nuclei}.
\newblock {\em Analysis and Mathematical Physics}, 1-32, (2018).

\bibitem{gorbachuk2012boundary}
{\sc Gorbachuk, N.}
\newblock{Boundary value problems for operator differential equations}.
\newblock{Springer Science \& Business Media}, 48, 2012.


\bibitem{gustafson1973some}
{\sc Gustafson, K., and Rejto, P.}
\newblock {Some essentially self-adjoint Dirac operators with spherically
  symmetric potentials}.
\newblock {\em Israel Journal of Mathematics 14}, 1 (1973), 63--75.


\bibitem{hogreve2012overcritical}
{\sc Hogreve, H.}
\newblock {The overcritical Dirac--Coulomb operator}.
\newblock {\em Journal of Physics A: Mathematical and Theoretical 46}, 2
  (2012), 025301.

\bibitem{kato1951fundamental}
{\sc Kato, T.}
\newblock {Fundamental properties of Hamiltonian operators of Schr{\"o}dinger
  type}.
\newblock {\em Transactions of the American Mathematical Society 70}, 2 (1951),
  195--211.

\bibitem{kato1983holomorphic}
{\sc Kato, T.}
\newblock {Holomorphic families of Dirac operators}.
\newblock {\em Mathematische Zeitschrift 183}, 3 (1983), 399--406.

\bibitem{kato2013perturbation}
{\sc Kato, T.}
\newblock {\em {Perturbation theory for linear operators}}, vol.~132.
\newblock Springer Science \& Business Media, 2013.

\bibitem{klaus1979characterization}
{\sc Klaus, M., and W{\"u}st, R.}
\newblock {Characterization and uniqueness of distinguished self-adjoint
  extensions of Dirac operators}.
\newblock {\em Communications in Mathematical Physics 64}, 2 (1979), 171--176.

\bibitem{muller2016minimax}
{\sc M\"uller, D} 
\newblock{Minimax Principles, Hardy-Dirac Inequalities, and Operator Cores for Two and Three Dimensional Coulomb-Dirac Operators.}
\newblock{\em Documenta Mathematica 21} (2016): 1151-1169.

\bibitem{nenciu1976self}
{\sc Nenciu, G.}
\newblock {Self-adjointness and invariance of the essential spectrum for Dirac
  operators defined as quadratic forms}.
\newblock {\em Communications in Mathematical Physics 48}, 3 (1976), 235--247.

\bibitem{PANKRASHKIN2006207}
{\sc Pankrashkin, K.}
\newblock Resolvents of self-adjoint extensions with mixed boundary conditions.
\newblock {\em Reports on Mathematical Physics 58}, 2 (2006), 207 -- 221.

\bibitem{pavlov1987theory}
{\sc Pavlov, B.~S.}
\newblock The theory of extensions and explicitly-soluble models.
\newblock {\em Russian Mathematical Surveys 42}, 6 (1987), 127.

\bibitem{reedsimon2}
{\sc Reed, M., and Simon, B.}
\newblock {\em {Methods of modern mathematical physics. vol. 2. Fourier
  analysis}}.
\newblock Academic Press, New York, 1975.

\bibitem{reedsimon1}
{\sc Reed, M., and Simon, B.}
\newblock {\em {Methods of modern mathematical physics. vol. 1. Functional
  analysis}}.
\newblock Academic Press, New York, 1980.

\bibitem{rellich1953eigenwerttheorie}
{\sc Rellich, F., and J{\"o}rgens, K.}
\newblock {\em {Eigenwerttheorie partieller Differentialgleichungen: Vorlesung,
  gehalten an der Universit{\"a}t G{\"o}ttingen}}.
\newblock Mathematisches Institut der Universit{\"a}t G{\"o}ttingen, 1953.

\bibitem{schmincke1972distinguished}
{\sc Schmincke, U.-W.}
\newblock {Distinguished selfadjoint extensions of Dirac operators}.
\newblock {\em Mathematische Zeitschrift 129}, 4 (1972), 335--349.

\bibitem{schmincke1972essential}
{\sc Schmincke, U.-W.}
\newblock {Essential selfadjointness of Dirac operators with a strongly
  singular potential}.
\newblock {\em Mathematische Zeitschrift 126}, 1 (1972), 71--81.

\bibitem{schmudgen2012unbounded}
{\sc Schm{\"u}dgen, K.}
\newblock {\em Unbounded self-adjoint operators on Hilbert space}, vol.~265.
\newblock Springer Science \& Business Media, 2012.

\bibitem{thaller}
{\sc Thaller, B.}
\newblock {\em {The Dirac equation}}, vol.~31.
\newblock Springer-Verlag Berlin, 1992.

\bibitem{voronov2007dirac}
{\sc Voronov, B.~L., Gitman, D.~M., and Tyutin, I.~V.}
\newblock {The Dirac Hamiltonian with a superstrong Coulomb field}.
\newblock {\em Theoretical and Mathematical Physics 150}, 1 (2007), 34--72.

\bibitem{weidmann1971oszillationsmethoden}
{\sc Weidmann, J.}
\newblock {Oszillationsmethoden f{\"u}r systeme gew{\"o}hnlicher
  Differentialgleichungen}.
\newblock {\em Mathematische Zeitschrift 119}, 4 (1971), 349--373.

\bibitem{wust1975distinguished}
{\sc W{\"u}st, R.}
\newblock {Distinguished self-adjoint extensions of Dirac operators constructed
  by means of cut-off potentials}.
\newblock {\em Mathematische Zeitschrift 141}, 1 (1975), 93--98.

\bibitem{xia1999contribution}
{\sc Xia, J.}
\newblock {On the contribution of the Coulomb singularity of arbitrary charge
  to the Dirac Hamiltonian}.
\newblock {\em Transactions of the American Mathematical Society 351}, 5
  (1999), 1989--2023.
  

\end{thebibliography}
\end{document}